\newcommand{\F}{\mathscr{F}}
\newcommand{\Lr}{\mathscr{L}}
\newcommand{\dvol}{\frac{\omega^n}{n!}}
\newcommand{\dvollin}{\frac{\omegalin^n}{n!}}
\newcommand{\dsubvol}{\frac{\omega^{n-1}}{(n-1)!}}
\newcommand{\dsubsubvol}{\frac{\omega^{n-2}}{(n-2)!}}
\newcommand{\tr}{\mathrm{tr}}
\newcommand{\omegalin}{\omega_{\textrm{lin}}}
\newcommand{\Walg}{\mathbb{W}}
\newcommand{\Sp}{\mathrm{Sp}}
\newcommand{\g}{\mathfrak{g}}
\newcommand{\R}{\mathbb{R}}
\newcommand{\C}{\mathbb{C}}
\newcommand{\N}{\mathbb{N}}
\def\cl{\mathop{\mathrm{Cl}}\nolimits}
\def\ham{\mathop{\mathrm{Ham}}\nolimits}
\def\diff{\mathop{\mathrm{Diff}}\nolimits}
\def\VM{\mathop{\mathfrak{X}(M)}\nolimits}
\newcommand{\Fl}{\mathrm{Fl}}
\newcommand{\Z}{\mathcal{Z}}
\newcommand{\Y}{\mathcal{Y}}
\newcommand{\A}{\mathcal{A}}
\newcommand{\V}{\mathcal{V}}
\newcommand{\T}{\mathcal{T}}
\newcommand{\E}{\mathcal{E}}
\newcommand{\J}{\mathcal{J}}
\newcommand{\D}{\mathcal{D}}
\newcommand{\RR}{\mathcal{R}}
\newcommand{\Rr}{\mathrm{R}}
\newcommand{\extwedge}{\stackrel{\circ}{\wedge}}
\newcommand{\WW}{\mathbb{W}}
\newcommand{\ddt}{\frac{d}{dt}}
\newcommand{\dds}{\frac{d}{ds}}
\newcommand{\ddto}{\left.\ddt\right|_{0}}
\newcommand{\ddso}{\left.\dds\right|_{0}}
\newcommand{\Om}{\Omega^{\diff_0}}
\newcommand{\Omtilde}{\widetilde{\Omega}^{\diff_0}}
\newcommand{\invnu}{\frac{1}{\nu}}
\newcommand{\W}{\mathcal{W}}
\newtheorem{theorem}{Theorem}[section]
\newtheorem{lemma}[theorem]{Lemma}
\newtheorem{cor}[theorem]{Corollary}
\newtheorem{prop}[theorem]{Proposition}
\newtheorem{defi}[theorem]{Definition}
\theoremstyle{definition} 
\newtheorem{ex}[theorem]{Exemple}
\theoremstyle{remark}
\newtheorem{rem}[theorem]{Remark}
\begin{document}

\renewcommand{\refname}{Bibliography}

\title{A formal moment map on $\textrm{Diff}_0(M)$}

\author{Laurent La Fuente-Gravy\\
	\scriptsize{laulafuent@gmail.com}\\
	\footnotesize{Universit\'e libre de Bruxelles and Haute-\'Ecole Bruxelles-Brabant}\\[-7pt]
	\footnotesize{Belgium} \\[-7pt]}

\maketitle

\begin{abstract}
In the framework of deformation quantization, we obtain a deformation of Donaldson moment map on $\textrm{Diff}_0(M)$, the connected component of the group of diffeomorphisms of a symplectic manifold $(M,\omega)$ admitting another symplectic structure $\chi$.
\end{abstract}

\noindent {\footnotesize {\bf Keywords:} Symplectic connections, Moment map, Deformation quantization, Hamiltonian diffeomorphisms, diffeomorphisms group.\\
{\bf Mathematics Subject Classification (2010):}  53D55, 53D20, 58D05}

\tableofcontents


\section{Introduction}

We exhibit another formal moment map on an infinite dimensional space using the technique from \cite{LLFlast}. This formal moment map appears to be a deformation of Donaldson moment map \cite{Don1} on $\diff_0(M)$ the connected component of the group of diffeomorphism of a closed symplectic manifold $(M,\omega)$ in the presence of an extra symplectic form $\chi$. 

When the manifold $M$ is K\"ahler and both $\omega$ and $\chi$ are K\"ahler forms, Donaldson \cite{Don1} deduced from his moment map a flow of K\"ahler metrics named the J-flow. As pointed out by Chen \cite{Ch}, existence of solutions of the J-flow leads to informations on the Mabuchi K-energy which plays a key role in the cscK metric problem.

Our deformation framework is deformation quantization \cite{BFFLS}. We consider a Fedosov star product algebra bundle $\mathcal{V}$ over $\diff_0(M)$. On the fiber above $f\in \diff_0(M)$, the star product is obtained from Fedosov construction parametrised with the formal $2$-form $\nu f^*\chi$ and symplectic connection $\nabla^f$ depending on $f$, see Subsection \ref{sect:nablaf}.

We use the construction of Andersen--Masulli--Sch\"atz \cite{AMS} to obtain a formal connection on $\mathcal{V}$. We show its curvature acts by inner derivations, that is the $*$-commutator with a formal function.

Acting as in the prequantisation picture, we consider the trace of the curvature of the formal connection as a first Chern form of a line bundle. We show this form is a deformation of the symplectic form on $\diff_0(M)$ that is preserved by the action of $\ham(M,\omega)$, the group of Hamiltonian diffeomorphisms of $(M,\omega)$, on $\diff_0(M)$.

Finally, we show that a formal moment map for the action of $\ham(M,\omega)$ on $\diff_0(M)$ is given by the trace of star product. At first order, our result recovers Donaldson moment map.

The result of this paper explain what was presented as a co\"incidence in \cite{LLF} linking first terms of traces for star products and moment maps on infinite dimensional space. It justifies the closed Fedosov star product problem as a problem of finding zeroes of a formal moment map as suggested from the works \cite{LLF,LLF2} and \cite{FutLLF1,FutLLF2}.


\section{A bundle of Fedosov $*$-product algebras on $\diff_0(M)$}

Throughout this short paper, we consider a closed symplectic manifold $(M,\omega)$ of dimension $2m$ admitting another symplectic form $\chi$. We also deal with infinite dimensional manifolds and Lie groups, we will follow the theory from \cite{KHN}.

\subsection{The symplectic structure on $\diff_0(M)$} \label{subsect:symplstruct}

We consider the connected component $\diff_0(M)$ of the group of diffeomorphisms on $M$. It is a Fr\'echet manifold modeled on the space $\VM$ of smooth vector fields on $M$. At any point $f\in \diff_0(M)$, the tangent space $T_f \diff_0(M)$ is identified to the space $\Gamma(f^*TM)$ of smooth sections of the pullback bundle $f^*TM$. So that a tangent vector at $f$ is a \emph{vector field along} $f$.

Given $Y\in \VM$, we extend it as the right invariant vector field $\Y: f\mapsto \Y_f:= Y\circ f$  on $\diff_0(M)$. The vector field $\Y$ as a flow $\Fl_t^{\Y}$ obtained from the flow of $\phi_t^Y$ of $Y$ on $M$ and defined by
\begin{equation}
\Fl_t^{\Y}(f):=\phi_t^Y \circ f, \ \textrm{ for all } f\in \diff_0(M).
\end{equation}
This flow satisfies
\begin{equation*}
\ddt \Fl_t^{\Y}(f):=\Y_{\Fl_t^{\Y}(f)}, \ \textrm{ for all } f\in \diff_0(M).
\end{equation*}
The Lie bracket of two right invariant vector fields $\Y$ and $\Z$, build out of $Y,Z \in \VM$ is given by
\begin{equation}\label{eq:Liebra}
\left[\Y,\Z\right]^{\diff_0}_f :=[Y,Z]\circ f , \ \textrm{ for all } f\in \diff_0(M),
\end{equation}
where the bracket on the RHS is the usual bracket of vector fields on $M$.

The symplectic form on $\diff_0(M)$ we will study is obtained from $\chi$ and $\omega$, it is defined as
\begin{equation*}
\Om_f(\Y_f,\Z_f):= \int_M \chi_{f(\cdot)}(Y_{f(\cdot)}, Z_{f(\cdot)}) \dvol.
\end{equation*}
To check $d^{\diff_0}\Om$ vanishes, one uses the fact that $d\chi=0$ and remember that the formula of the differential for forms on $\diff_0(M)$ writes for a $p$-form $\Theta$ on $\diff_0(M)$, 
\begin{eqnarray*}
(d^{\diff_0}\Theta)_f (Y_0\circ f,  \ldots, Y_p\circ f) & := &  \sum_{i=1}^p (-1)^i(\Y_i)_f\left(\Theta(\Y_0,\ldots \hat{\Y_i} \ldots, \Y_p)\right) \\
 & \hspace{-3cm} + & \hspace{-2cm}  \sum_{0\leq i<j \leq p} (-1)^{i+j+1} \Theta_f(\left[\Y_i,\Y_j\right]^{\diff_0(M)}, \Y_0,\ldots, \hat{\Y_i},\ldots,\hat{\Y_j},\ldots,\Y_p),
\end{eqnarray*}
for $f\in \diff_0(M)$ and $\Y_0,  \ldots,\Y_p$ right invariant vector fields obtained from $Y_0,\ldots Y_p$ vector fields on $M$. 
Note that we decorate Lie bracket and differential on $\diff_0(M)$ with the superscript $\diff_0$ to prevent from confusion with the corresponding notions on $M$.

We consider the group $\ham(M,\omega)$ of Hamiltonian diffeomorphisms of $(M,\omega)$. It acts on the right on $\diff_0(M)$ by
\begin{equation*}
\varphi\cdot f := f\circ \varphi, \textrm{ for } f\in \diff_0(M) \textrm{ and } \varphi \in \ham(M,\omega).
\end{equation*}
One checks this action preserves the symplectic form $\Om$.
The infinitesimal action of a Hamiltonian vector field $X_H$ (i.e. $X_H$ is defined by $\imath(X_H)\omega=dH$ for $H\in C^{\infty}(M)$) is then given by
\begin{equation} \label{eq:infXH}
X_H \cdot f := f_* X_H, \textrm{ for } f\in \diff_0(M),
\end{equation}
which is indeed a section of $f^*TM$.

Donaldson \cite{Don1} obtained a moment map for the above action of $\ham(M,\omega)$ on the symplectic manifold $(\diff_0(M),\Om)$. We will recover it from our construction of a formal moment map.

\subsection{Symplectic connections and $\diff_0(M)$} \label{sect:nablaf}

In the sequel, we will need to attach a Fedosov star product to a diffeomorphism $f\in \diff_0(M)$. Since one of the ingredients for constructing a Fedosov star product is a symplectic connection, we need a map that sends a diffeomorphism to a symplectic connection.

To obtain such a map, let us recall how the existence of a symplectic connection on any symplectic manifold is settled. First, a symplectic connection on $(M,\omega)$ is a connection on $TM$ that leaves $\omega$ parallel and has no torsion. Starting from any torsion-free connection $\nabla$ on $M$ (which always exists), one turns it into a symplectic connection $\widetilde{\nabla}$. One first define a tensor $N^{\nabla}$ by
\begin{equation*} \label{eq:tensorN}
\omega(N^{\nabla}(X,Y),Z):= (\nabla_X\omega)(Y,Z) \textrm{ for all } X,Y,Z \in TM.
\end{equation*}
Then, one checks by direct computation that the connection
\begin{equation*}
\widetilde{\nabla}_X Y:= \nabla_X Y + \frac{1}{3}N^{\nabla}(X,Y) + \frac{1}{3}N^{\nabla}(Y,X),
\end{equation*}
for $X,Y \in \VM$, defines a symplectic connection. 

Any two torsion-free connections $\nabla$ and $\widetilde{\nabla}$ differ from each other by a tensor $A(\cdot)\cdot \in \Gamma(TM\otimes S^2T^*M)$. Recall that if $\nabla$ is symplectic, then $\widetilde{\nabla}$ is symplectic if and only if $\underline{A}(\cdot,\cdot,\cdot):=\omega(\cdot,A(\cdot)\cdot)$ is a completely symmetric $3$-tensor on $M$. Also, recall that the space of symplectic connections $\E(M,\omega)$ is an affine space modelled on the space $\Gamma(S^3T^*M)$ of completely symmetric $3$-tensors on $M$.

Now, consider a symplectic connection $\nabla$ and a diffeomorphism $f$. One builds the torsion-free connection $f^*\nabla$ defined for $X,Y \in \VM$ by
\begin{equation*}
(f^*\nabla)_X Y:=f^{-1}_*(\nabla_{f_* X} f_*Y).
\end{equation*}

\begin{defi} \label{def:nablaf}
Choose $\nabla \in \E(M,\omega)$ a symplectic connection, define \emph{the map $\pi_{\nabla}$}
\begin{equation*}
\pi_{\nabla}:\diff_0(M) \rightarrow \E(M,\omega):f \mapsto \pi_{\nabla}(f):=\nabla^f,
\end{equation*}
where $(\nabla^f)_X Y:=(f^*\nabla)_X Y+ \frac{1}{3} N^{f^*\nabla}(X,Y) +\frac{1}{3}N^{f^*\nabla}(Y,X)$ and $X,Y\in \VM$.
\end{defi}

\begin{rem}
Similarly, one may also define a map from the space of torsion-free connections with values in the space of symplectic connections.
\end{rem}

Our construction of equivariant formal moment map will rely on the next proposition showing the equivariance of $\pi_{\nabla}$.

\begin{prop} \label{prop:nablaphi}
Let $\nabla$ be a symplectic connection, $f\in \diff_0(M)$ and write $f^*\nabla:=\nabla+A^f$ for $A^f\in \Gamma(TM\otimes S^2T^*M)$. \\
Then, writing $\pi_{\nabla}(f)= \nabla + B^f$, we have $\underline{B^f}(X,Y,Z)=\frac{1}{3} \stackrel{\curvearrowright}{\underset{XYZ}{\oplus}}\omega(X,A^f(Y)Z)$ for $X,Y,Z \in TM$ and $\stackrel{\curvearrowright}{\oplus}$ denotes the cyclic sum. \\
Moreover, $\pi_{\nabla}(f\circ \varphi)=\varphi^*\pi_{\nabla}(f)$ for all $\varphi \in \ham(M,\omega)$.
\end{prop}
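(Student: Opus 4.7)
I would introduce the shorthand $T(X,Y,Z):=\underline{A^f}(X,Y,Z)=\omega(X,A^f(Y)Z)$ and remember that torsion-freeness of $f^*\nabla$ forces $A^f(Y)Z=A^f(Z)Y$, so $T$ is symmetric in its last two slots. The whole first part of the proposition is then a finite $\omega$-calculation: once $N^{f^*\nabla}$ is rewritten in terms of $T$, the answer follows by collecting terms and using this partial symmetry.

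To carry out part (1), I would first compute $((f^*\nabla)_X\omega)(Y,Z)$. Using $f^*\nabla=\nabla+A^f$ together with $\nabla\omega=0$ and the derivation property, this equals $-\omega(A^f(X)Y,Z)-\omega(Y,A^f(X)Z)$. Inserting the defining relation $\omega(N^{f^*\nabla}(X,Y),Z)=((f^*\nabla)_X\omega)(Y,Z)$ and using the symmetry of $A^f$ to re-express everything in the $T$-notation, I expect to obtain
\begin{equation*}
\omega\bigl(X,N^{f^*\nabla}(Y,Z)\bigr)=-T(X,Y,Z)+T(Z,Y,X).
\end{equation*}
Plugging this (and the $Y\leftrightarrow Z$ swapped version) into $\underline{B^f}(X,Y,Z)=\omega(X,A^f(Y)Z)+\tfrac13\omega(X,N^{f^*\nabla}(Y,Z))+\tfrac13\omega(X,N^{f^*\nabla}(Z,Y))$, the $T(X,Y,Z)$ terms combine to $\tfrac13 T(X,Y,Z)$, and the leftover terms $\tfrac13 T(Z,Y,X)+\tfrac13 T(Y,Z,X)$ become, after using $T(Z,Y,X)=T(Z,X,Y)$, precisely the cyclic sum $\tfrac13\,\stackrel{\curvearrowright}{\oplus}_{XYZ}T(X,Y,Z)$.

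For part (2), the key observation is that $(f\circ\varphi)^*\nabla=\varphi^*(f^*\nabla)$, so it suffices to show that the correction $N\mapsto \tfrac13 N+\tfrac13 N^{T}$ which turns a torsion-free connection into a symplectic one is natural under symplectomorphisms. Concretely, I would prove the intertwining identity $N^{\varphi^*\nabla'}=\varphi^*N^{\nabla'}$ for any torsion-free $\nabla'$ and any $\varphi\in\symp(M,\omega)$. This is a direct pullback computation: since $\varphi^*\omega=\omega$, the identity $((\varphi^*\nabla')_X\omega)(Y,Z)=(\nabla'_{\varphi_* X}\omega)(\varphi_* Y,\varphi_* Z)$ gives the claim after inverting $\varphi_*$ using once more the invariance of $\omega$. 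Since $\ham(M,\omega)\subset\symp(M,\omega)$, applying this to $\nabla'=f^*\nabla$ yields $\pi_\nabla(f\circ\varphi)=\varphi^*(f^*\nabla)+\tfrac13\,\varphi^*(N^{f^*\nabla}+(N^{f^*\nabla})^T)=\varphi^*\pi_\nabla(f)$.

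The only place where a mistake could creep in is the cyclic bookkeeping in part (1); everything else is formal pullback manipulation. I would therefore spend most of the writeup on carefully displaying the two $T$-identities for $\omega(X,N^{f^*\nabla}(Y,Z))$ and its swap, and then invoke the partial symmetry of $T$ exactly once at the end to reorganise the surviving three terms into a genuine cyclic sum.
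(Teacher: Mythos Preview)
Your proof of the first part is correct and coincides with the paper's computation: both compute $\omega(N^{f^*\nabla}(X,Y),Z)=\omega(Z,A^f(X)Y)-\omega(Y,A^f(X)Z)$ and then collect terms, and your use of the partial symmetry $T(X,Y,Z)=T(X,Z,Y)$ is exactly what is needed to recognise the result as a cyclic sum.

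For the equivariance, your argument is correct but takes a different route from the paper. You prove the naturality of the symplectification map directly, via the intertwining identity $N^{\varphi^*\nabla'}=\varphi^*N^{\nabla'}$ for any torsion-free $\nabla'$ and any symplectomorphism $\varphi$; applying this to $\nabla'=f^*\nabla$ gives $\pi_\nabla(f\circ\varphi)=\varphi^*\pi_\nabla(f)$ in one line. The paper instead reuses the formula from part~(1): writing $A^{f\circ\varphi}=A^{\varphi}+\varphi^*A^f$, it observes that $\underline{A^{\varphi}}$ is already completely symmetric (since $\varphi^*\nabla$ is still symplectic), so the cyclic sum over $A^{\varphi}$ reproduces $A^{\varphi}$ itself, while the cyclic sum over $\varphi^*A^f$ yields $\varphi^*B^f$ by $\varphi$-invariance of $\omega$; hence $\pi_\nabla(f\circ\varphi)=\nabla+A^{\varphi}+\varphi^*B^f=\varphi^*\nabla+\varphi^*B^f=\varphi^*\pi_\nabla(f)$. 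Your argument is more conceptual and actually proves the slightly stronger fact that the symplectification procedure itself is $\symp(M,\omega)$-equivariant; the paper's argument is more hands-on but nicely exhibits where the extra piece $A^{\varphi}$ goes.
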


\begin{proof}
To compute $\pi_{\nabla}(f)$, we first determine $N^{f^*\nabla}$ : for $X,Y,Z \in TM$,
\begin{eqnarray*}
\omega(N^{f^*\nabla}(X,Y),Z) & = & \left( (f^*\nabla)_X\omega\right)(Y,Z) \\
 & = & \omega(Z,A^f(X)Y)-\omega(Y,A^f(X)Z) \ \ \ \textrm{because } \nabla \textrm{ is symplectic.}
\end{eqnarray*}
Now, $B^f(X)Y = A^f(X)Y+\frac{1}{3} N^{f^*\nabla}(X,Y) +\frac{1}{3}N^{f^*\nabla}(Y,X)$ and one checks by direct computation that 
$$\omega(X,B^f(Y)Z)=\frac{1}{3} \stackrel{\curvearrowright}{\underset{XYZ}{\oplus}}\omega(X,A^f(Y)Z).$$

Now, for $\varphi\in\ham(M,\omega)$, we have $\varphi^* f^*\nabla=\varphi^*\nabla+\varphi^*A^f$. Hence, $A^{f\circ \varphi}=A^{\varphi}+\varphi^*A^f$. Remark that $\omega(\cdot,A^{\varphi}(\cdot)\cdot)$ is already completely symmetric as the action of $\ham(M,\omega)$ preserves the symplecticity of the connection $\nabla$. By applying the first part of this proposition and the fact that $\varphi$ preserves $\omega$, we have
\begin{eqnarray*}
\omega(X,B^{f\circ \varphi}(Y)Z) & = & \omega(X,A^{\varphi}(Y)Z)+\frac{1}{3} \stackrel{\curvearrowright}{\underset{XYZ}{\oplus}}\omega(\varphi_*X,A^f(\varphi_*Y)\varphi_*Z). \\
 & = & \omega(X,A^{\varphi}(Y)Z)+\omega(X,(\varphi^*B^f)(Y)Z).
\end{eqnarray*}
It means $\pi_{\nabla}(f\circ \varphi)=\nabla+A^{\varphi} + \varphi^*B^f=\varphi^*(\nabla+B^f)=\varphi^*\pi_{\nabla}(f)$
\end{proof}

\subsection{Fedosov star products}

We summarise the construction of Fedosov star products \cite{fed2}.

On $(M,\omega)$, consider a basis $\{e_1,\ldots,e_{2n}\}$ a basis of $T_xM$ at $x\in M$ for which $\omega_{ij}:=\omega(e_i,e_j)$. We consider the dual basis $\{y^1,\ldots, y^{2n}\}$ of $T^*_xM$. The formal Weyl algebra $\WW_x$ at $x\in M$ is the algebra of formal symmetric forms on $T_xM$ of the form:
$$a(y,\nu):=\sum_{2k+r=0}^{\infty} \nu^r a_{r,i_1\ldots i_k}y^{i_1}\ldots y^{i_k}$$
where $a_{r,i_1\ldots i_k}$ symmetric in $i_1\ldots i_k$ and $2k+r$ is the total degree, with product $\circ$ given, for $a(y,\nu)$ and  $b(y,\nu)\in \WW_x$ by
\begin{eqnarray*}
(a\circ b)(y,\nu) & := &\left.\left( \exp\left(\frac{\nu}{2}\Lambda^{ij} \partial_{y^i} \partial_{z^j}\right)a(y,\nu)b(z,\nu)\right)\right|_{y=z} \nonumber \\
\end{eqnarray*}

We globalise the above over $M$ to get the formal Weyl algebra bundle $\W:=\bigsqcup_{x\in M}\WW_x$. We also consider differential forms with values in the $\W$ by $\W\otimes \Lambda M$ whose sections are tensors on $M$ that write locally as:
\begin{equation*} \label{eq:sectionofW}
\sum_{2k+l\geq 0,\, k,l\geq 0, p\geq 0} \nu^k a_{k,i_1\ldots i_l,j_1\ldots j_p}(x)y^{i_1}\ldots y^{i_l}dx^{j_1}\wedge \ldots \wedge dx^{j_p}.
\end{equation*}
The $a_{k,i_1\ldots i_l,j_1\ldots j_p}(x)$ are symmetric in the $i$'s and antisymmetric in the $j$'s. The space $\Gamma \W\otimes \Lambda^*M$ is filtered with respect to the total degree
$$\Gamma \W\otimes \Lambda^*M \supset \Gamma \W^1\otimes \Lambda^*M\supset \Gamma \W^2\otimes \Lambda^*M \supset \ldots.$$
Extending fiberwisely the $\circ$-product turns $\Gamma \W\otimes \Lambda^*M$ into an algebra. That is, for $a, b\in \Gamma \W$ and $\alpha, \beta\in \Omega^*(M)$, we define
$(a\otimes \alpha) \circ (b\otimes \beta) := a\circ b \otimes \alpha\wedge \beta$. The graded commutator $[s,s']:=s\circ s'- (-1)^{q_1q_2}s'\circ s$ where $s$, resp. $s'$ are of anti-symmetric degree $q_1$, resp. $q_2$ makes $\W$-valued forms a graded Lie algebra.

A symplectic connection $\nabla$ on $(M,\omega)$ induces a derivation $\partial$ of degree $+1$ (anti-symmetric degree) on $\W$-valued forms by :
\begin{equation*}
\partial a := da + \frac{1}{\nu}[\overline{\Gamma},a]  \textrm{ for } a\in \Gamma \W\otimes \Lambda M,
\end{equation*}
where $\overline{\Gamma}:=\frac{1}{2}\omega_{lk}\Gamma^k_{ij}y^ly^jdx^i$, for $\Gamma^k_{ij}$ the Christoffel symbols of $\nabla$ on a Darboux chart, making  $\omega_{lk}\Gamma^k_{ij}$ completely symmetric in $i,j,l$. 

Setting $\overline{R}:= \frac{1}{4} \omega_{ir}R^r_{jkl}y^iy^jdx^k\wedge dx^l$, for $R^r_{jkl}:=\left(R(\partial_k,\partial_l)\partial_j\right)^r$ the components of the curvature tensor of $\nabla$, the curvature of $\partial$ is
\begin{equation*}
\partial\circ \partial\, a := \frac{1}{\nu}[\overline{R},a].
\end{equation*}

To make this connection flat, we consider connections on $\Gamma \W$ of the form
\begin{equation*} \label{eq:defD}
D a:=\partial a - \delta a + \frac{1}{\nu}[r,a],
\end{equation*}
where $r$ is a $\W$-valued $1$-form and $\delta$ is defined by
\begin{equation*} \label{eq:deltadef}
\delta(a) := dx_k\wedge \partial_{y_k} a=-\frac{1}{\nu}[\omega_{ij}y^i dx^j,a].
\end{equation*}
The curvature of $D$ is
\begin{equation*}
D^2 a = \frac{1}{\nu}\left[\overline{R} + \partial r - \delta r + \frac{1}{2\nu}[r,r]-\omega,a\right].
\end{equation*}

Define 
$$\delta^{-1} a_{pq}:= \frac{1}{p+q}y^ki(\partial_{x^k})a_{pq} \textrm{ if } p+q>0 \textrm{ and } \delta^{-1}a_{00}=0,$$
where $a_{pq}$ is a $q$-forms with $p$ $y$'s and $p+q>0$. Fedosov showed \cite{fed2}, for any given closed central $2$-form $\Omega$,
there exists a unique solution $r \in \Gamma \W \otimes \Omega^1 M$ with $\W$-degree at least $3$ of equation:
\begin{equation*}\label{eq:req}
\overline{R} + \partial r - \delta r + \frac{1}{\nu}r\circ r = \Omega,
\end{equation*}
and satisfying $\delta^{-1}r=0$. Becaue $\Omega$ is central for the $\circ$-product, it makes $D$ flat.

Set $D$ the flat connection obtained as above. Flat sections $\Gamma \W_{D} := \{a\in \Gamma \W | D a=0\}$ form an algebra for the $\circ$-product since $D$ is a derivation. The symbol map is defined by $\sigma :a\in \Gamma \W_{D} \mapsto \left.a\right|_{y=0}\in C^{\infty}(M)[[\nu]]$. Fedosov showed \cite{fed2} that $\sigma$ is a bijection with inverse $Q$ defined by 
\begin{equation*} \label{eq:defQ}
Q:=\sum_{k\geq 0} \left(\delta^{-1}(\partial + \frac{1}{\nu}[r,\cdot])\right)^k.
\end{equation*}
Hence, the $\circ$-product induces a star product $*$ on $C^{\infty}(M)[[\nu]]$, called Fedosov star product.

The following is a technical lemma we will need. Its proof can be found in \cite{fed4}.
\begin{lemma}\label{lemme:Dinverse}
Suppose $b \in \Gamma(W)\otimes \Lambda^1M$ satisfy $Db = 0$. Then the equation $Da = b$ admits a
unique solution $a \in \Gamma(W)$, such that $a|_{y=0} = 0$, it is given by
$$b=D^{-1}a:=-Q(\delta^{-1}a).$$
\end{lemma}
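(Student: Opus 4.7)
The plan is to invert $D=-\delta+\partial+\frac{1}{\nu}[r,\cdot]$ by treating $-\delta$ as the principal part and the remainder $\partial+\frac{1}{\nu}[r,\cdot]$ as a filtration-raising perturbation. The essential tool is the Hodge-type identity
$$\delta\delta^{-1}+\delta^{-1}\delta+\sigma=\mathrm{Id}$$
on $\Gamma\W\otimes\Lambda^*M$, where $\sigma$ projects onto the $y=0$, form-degree zero component. Since $\delta^{-1}$ annihilates $0$-forms (because the contraction $i(\partial_{x^k})$ kills them), any $a\in\Gamma\W$ with $a|_{y=0}=0$ satisfies $\delta^{-1}\delta a=a$.

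First I would rearrange $Da=b$ as $\delta a=\partial a+\frac{1}{\nu}[r,a]-b$ and apply $\delta^{-1}$, which yields the fixed-point equation
$$a=Ta-\delta^{-1}b,\qquad T:=\delta^{-1}\Bigl(\partial+\tfrac{1}{\nu}[r,\cdot]\Bigr).$$
The operator $T$ strictly raises the total Weyl degree: $\delta^{-1}$ raises $\deg_y$ by $1$, $\partial$ preserves total degree, and $\frac{1}{\nu}[r,\cdot]$ raises it by at least $1$ because $r$ has $\W$-degree $\geq 3$, so that the leading $\nu^0$ contribution of the commutator loses $2$ in the $\circ$-product while the $\frac{1}{\nu}$ gains back $2$ from the $\nu$-weight. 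Consequently the Neumann series $Q=\sum_{k\geq 0}T^k$ converges in the degree filtration, and the candidate
$$a\,:=\,-Q(\delta^{-1}b)$$
is a well-defined element of $\Gamma\W$ which by construction satisfies $a=-\delta^{-1}b+Ta$ and $a|_{y=0}=0$.

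To verify $Da=b$, apply $\delta$ to the identity $a+\delta^{-1}b=Ta$ and use the Hodge identity on $1$-forms (where $\sigma=0$); a short manipulation, combined with $\delta^{-1}\delta a=a$, reduces to $\delta^{-1}(Da-b)=0$. Setting $c:=Da-b$, the flatness $D^2=0$ of the Fedosov connection together with the hypothesis $Db=0$ gives $Dc=0$, equivalently $\delta c=\partial c+\frac{1}{\nu}[r,c]$. Since $\delta^{-1}c=0$, the Hodge identity collapses to $c=\delta^{-1}\delta c=Tc$, so that $c=T^kc$ for every $k$, and the degree-raising property of $T$ forces $c=0$. Uniqueness follows identically: if $Da=0$ and $a|_{y=0}=0$, the same rewriting gives $a=Ta$, hence $a=T^ka\to 0$. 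The main technical subtlety is ensuring that the $\sigma$ term in the Hodge decomposition vanishes at every step — which happens for the solution by the normalization $a|_{y=0}=0$ and automatically for $c$ since it is a $1$-form — and that the flatness $D^2=0$ is used precisely to propagate $Db=0$ into $Dc=0$.
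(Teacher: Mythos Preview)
Your argument is correct and is precisely the standard Fedosov proof. The paper itself does not give a proof of this lemma; it simply states the result and cites Fedosov \cite{fed4}. What you wrote is the argument one finds there: rewrite $Da=b$ as the fixed-point equation $a=Ta-\delta^{-1}b$ with $T=\delta^{-1}(\partial+\tfrac{1}{\nu}[r,\cdot])$, solve by the Neumann series $Q=\sum_{k\ge 0}T^k$, and then use $D^2=0$ together with the Hodge decomposition to check that the candidate really solves $Da=b$ and is unique.

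One minor comment on exposition: your explanation of why $\tfrac{1}{\nu}[r,\cdot]$ raises the total degree is slightly muddled. The cleanest way to say it is that the $\circ$-product preserves total degree, so $[r,a]$ has total degree at least $3+\deg a$, and multiplying by $\tfrac{1}{\nu}$ subtracts $2$; hence $\tfrac{1}{\nu}[r,a]$ has degree at least $\deg a+1$. Combined with $\delta^{-1}$ raising degree by $1$, this gives that $T$ raises total degree by at least $1$, which is all you need. (Also note the paper's displayed formula has the roles of $a$ and $b$ swapped by a typo; you correctly interpreted it as $a=-Q(\delta^{-1}b)$.)
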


In the sequel, to emphasize the dependence of $*$ (resp. $r$, $D$ and $Q$) in the choices $\nabla$ and $\Omega$, we will write $*_{\nabla,\Omega}$ (resp. $r^{\nabla,\Omega}$, $D^{\nabla,\Omega}$ and $Q^{\nabla,\Omega}$) and simply $*_{\nabla}$ (resp. $r^{\nabla}$,$D^{\nabla}$ and $Q^{\nabla}$) when $\Omega=0$.

As in \cite{LLFlast}, our technique relies on a canonical lift of smooth path of symplectic connections and formal series of closed $2$-forms to isomorphisms of Fedosov star product algebra, which comes from Fedosov \cite{fed2}.

To state it, we need sections of the extended bundle $\W^+\supset \W$ which are locally of the form 
\begin{equation*} \label{eq:sectionofW+}
\sum_{2k+l\geq 0, l\geq 0} \nu^k a_{k,i_1\ldots i_l}(x)y^{i_1}\ldots y^{i_l}.
\end{equation*}
similar to \eqref{eq:sectionofW}, with $p=0$, but we allow $k$ to take negative values, the total degree $2k+l$ of any term must remain nonnegative and in each given nonnegative total degree there is a finite number of terms.

\begin{theorem}\label{theor:smoothisom}
Consider smooth paths $t\in [0,1] \mapsto \nabla^t\in \E(M,\omega)$ and $t\in [0,1] \mapsto \chi_t\in \Omega^2(M)$.
Assume that for all $t$ : $\frac{d}{dt}\chi_t = d\theta_t$ for some smooth path $\theta_t\in \Omega^1(M)$.\\
Then there exists maps $B_t:\Gamma\W \rightarrow \Gamma \W$ defined by
$$B_t a:=v_t\circ a \circ v_t^{-1}$$
for $v_t \in \Gamma \W^+$ being the unique solution of the initial value problem:
\begin{equation*} \label{eq:vt}
\left\{\begin{array}{rcl}
\frac{d}{dt}v_t & = & \frac{1}{\nu}h_t\circ v_t \\
v_0 &= & 1
\end{array}\right.
\end{equation*}
with
\begin{equation*} \label{eq:ht}
h_t:=-(D^{\nabla^t,\,\nu\chi_t})^{-1}\left(\ddt \overline{\Gamma}^{\nabla^t}+\ddt r^{\nabla^t,\,\nu\chi_t} - \nu \theta_t\right).
\end{equation*}
Moreover, $B_t(D^{\nabla^0,\,\nu\chi_0} a)= D^{\nabla^t,\,\nu\chi_t}(B_t a)$ for all $a\in \Gamma\W$ so that $$\left.B_t\right|_{\Gamma\W_{D^{\nabla^0,\,\nu\chi_0}}}:\Gamma\W_{D^{\nabla^0,\,\nu\chi_0}} \rightarrow \Gamma \W_{D^{\nabla^t,\,\nu\chi_t}}$$ is an isomorphism of flat sections algebras and hence
\begin{equation} \label{eq:equivparall}
\sigma\circ B_t \circ Q^{\nabla^0,\,\nu\chi_0}:(C^{\infty}(M)[[\nu]],*_{\nabla^0,\,\nu\chi_0}) \rightarrow (C^{\infty}(M)[[\nu]],*_{\nabla^t,\,\nu\chi_t})
\end{equation}
is an equivalence of star product algebras.
\end{theorem}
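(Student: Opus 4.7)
The plan is to exhibit $D^{\nabla^t,\nu\chi_t}\circ B_t$ and $B_t\circ D^{\nabla^0,\nu\chi_0}$ as solutions of the same linear ODE in $t$ with the same value at $t=0$ and conclude by uniqueness. To ease notation, I will write $D^t$, $\partial^t$, $r^t$, $\overline{\Gamma}^t$, $\overline{R}^t$ for the Fedosov data associated with $(\nabla^t,\nu\chi_t)$.

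\textbf{Well-definedness of $h_t$.} First I must check that Lemma~\ref{lemme:Dinverse} applies to $b:=\ddt\overline{\Gamma}^t+\ddt r^t-\nu\theta_t$, that is, $D^tb=0$. Differentiating Fedosov's equation
\begin{equation*}
\overline{R}^t + \partial^t r^t - \delta r^t + \tfrac{1}{\nu}\,r^t\circ r^t \;=\; \nu\,\chi_t
\end{equation*}
in $t$ and using the two identities $\ddt\overline{R}^t=\partial^t(\ddt\overline{\Gamma}^t)$ (consequence of $\overline{R}^t=d\overline{\Gamma}^t+\tfrac{1}{2\nu}[\overline{\Gamma}^t,\overline{\Gamma}^t]$ modulo center) and $\delta\overline{\Gamma}^t=0$ (from total symmetry of $\omega_{lk}(\Gamma^t)^k_{ij}$ in its three lower indices), I expect the cross-terms to collapse into $D^t(\ddt\overline{\Gamma}^t+\ddt r^t)=\nu\,\ddt\chi_t$. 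Since $\theta_t$ has no $y$'s, it is central in $\Gamma\W$ and $D^t(\nu\theta_t)=\nu\,d\theta_t=\nu\,\ddt\chi_t$; subtraction gives $D^tb=0$. Lemma~\ref{lemme:Dinverse} then produces a unique $h_t$ with $h_t|_{y=0}=0$. Because $(D^t)^{-1}=-Q^{\nabla^t,\nu\chi_t}\circ\delta^{-1}$ raises total degree and $b$ already has total degree at least $2$, $h_t$ lies in $\Gamma\W^+$ with positive total degree, so the ODE $\ddt v_t=\tfrac{1}{\nu}h_t\circ v_t$ can be integrated order-by-order in total degree $2k+l$, yielding a unique $v_t\in\Gamma\W^+$.

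\textbf{Intertwining.} From $\ddt v_t^{-1}=-\tfrac{1}{\nu}v_t^{-1}\circ h_t$ I get $\ddt B_t a=\tfrac{1}{\nu}[h_t,B_ta]$. Using $\ddt D^t(\cdot)=\tfrac{1}{\nu}[\ddt\overline{\Gamma}^t+\ddt r^t,\cdot]$ and that $D^t$ is a graded derivation with $h_t$ of anti-symmetric degree zero,
\begin{equation*}
\ddt\bigl(D^tB_ta\bigr)=\tfrac{1}{\nu}\bigl[\ddt\overline{\Gamma}^t+\ddt r^t+D^th_t,\,B_ta\bigr]+\tfrac{1}{\nu}[h_t,D^tB_ta].
\end{equation*}
The defining equation $D^th_t=-(\ddt\overline{\Gamma}^t+\ddt r^t-\nu\theta_t)$ reduces the first bracket to $[\nu\theta_t,B_ta]=0$ by centrality of $\theta_t$, leaving $\ddt(D^tB_ta)=\tfrac{1}{\nu}[h_t,D^tB_ta]$. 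Trivially $\ddt(B_tD^0a)=\tfrac{1}{\nu}[h_t,B_tD^0a]$, and both sides equal $D^0a$ at $t=0$; uniqueness in the filtered algebra yields $D^tB_t=B_tD^0$ for every $t$.

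\textbf{Consequences and main obstacle.} Being conjugation by $v_t$, $B_t$ is automatically a $\circ$-algebra morphism with inverse $a\mapsto v_t^{-1}\circ a\circ v_t$. The intertwining then shows it restricts to an algebra isomorphism $\Gamma\W_{D^{\nabla^0,\nu\chi_0}}\to\Gamma\W_{D^{\nabla^t,\nu\chi_t}}$; composing with the Fedosov symbol $\sigma$ and its inverse $Q^{\nabla^0,\nu\chi_0}$ transports this to the advertised equivalence of star products \eqref{eq:equivparall}. The main obstacle is the well-definedness step: the differentiation of Fedosov's equation must be carried through carefully, especially managing $\ddt\overline{R}^t$ and the $\delta$-terms arising from the time-dependent $\overline{\Gamma}^t$, and one must confirm that $h_t$, despite being divided by $\nu$ in the ODE, still produces a $v_t$ lying in the allowed extended bundle $\Gamma\W^+$. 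The filtration condition $2k+l\geq 0$ with positive total degree of $\tfrac{1}{\nu}h_t$ is precisely what makes the formal integration for $v_t$ converge.
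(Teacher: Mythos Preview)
Your proof is correct and follows the standard Fedosov approach (the paper itself does not reproduce a proof but refers to \cite{fed2}). The well-definedness step---differentiating Fedosov's equation and observing that the $\nu\theta_t$ term exactly cancels $\nu\,d\theta_t$---is the usual argument, and your intertwining step via the linear ODE $\ddt X=\tfrac{1}{\nu}[h_t,X]$ is a clean repackaging of Fedosov's computation that $v_t^{-1}\circ D^t v_t+\tfrac{1}{\nu}(\Delta\Gamma_t+\Delta r_t)$ vanishes (the two arguments are equivalent, both resting on uniqueness in the filtered algebra). One minor imprecision: it is $h_t\in\Gamma\W^3\subset\Gamma\W$, not merely $\Gamma\W^+$; the extended bundle is needed only for $v_t$ after dividing by $\nu$.
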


\noindent The proof can be found in \cite{fed2}.

Similar to \cite{LLFlast}, $h_t$ above depends polynomially on $\nabla^t$, $\chi_t$ and $\theta_t$ and their covariant derivatives making the path $t\mapsto h_t$ smooth. It implies $t\mapsto v_t$ is smooth as well.

\subsection{A *-product algebra bundle over $\diff_0(M)$}

Consider a fixed symplectic connection $\nabla$ on $M$. We consider the family of Fedosov star products $\{*_{\nabla^f, \,\nu f^*\chi}\}_{f\in \diff_0(M)}$.

\begin{defi}
To the family of Fedosov star products $\{*_{\nabla^f, \,\nu f^*\chi}\}_{f\in \diff_0(M)}$, we attach a star product algebra bundle $\mathcal{V}$ over $\diff_0(M)$ defined by 
$$\mathcal{V}:= \mathcal{V}(\nabla)=\diff_0(M) \times C^{\infty}(M)[[\nu]] \stackrel{p}{\rightarrow} \diff_0(M),$$ 
with fiber $p^{-1}(f)$ above $f\in \diff_0(M)$ endowed with the Fedosov star product $*_{\nabla^f, \,\nu f^*\chi}$. 
\end{defi}

The above bundle depends on the choice of a symplectic connection. Consider two symplectic connections $\nabla$ and $\widetilde{\nabla}$ and produce the two star product families $\{*_{\nabla^f, \,\nu f^*\chi}\}_{f\in \diff_0(M)}$ and $\{*_{\widetilde{\nabla}^f, \,\nu f^*\chi}\}_{f\in \diff_0(M)}$ as well as the two bundle $\V(\nabla)$ and $\V(\widetilde{\nabla})$. 

We show $\V(\nabla)$ and $\V(\widetilde{\nabla})$ are isomorphic as $*$-product algebra bundles. Indeed, we consider the segment $t\mapsto \nabla^t:=\nabla + t(\widetilde{\nabla}-\nabla)$ of symplectic connections. For all $f\in \diff_0(M)$, we apply Theorem \ref{theor:smoothisom} to the path $t\mapsto (\nabla^t)^f$, the constant path of $2$-forms $t\mapsto f^*\chi$ and the choice $\beta_t=0$, to obtain a $*$-product equivalence between the fibers of $\V(\nabla)$ and $\V(\widetilde{\nabla})$ above $f$.

We go on with the definition of a compatible formal connection $\D$ on sections of $\V$. Recall that being a \emph{formal connection} means that $\D$ acts on sections of $\V$ by $d^{\diff_0}+\beta$, with $\beta=\sum_{k=1} \nu^k \beta_k$ being a formal series of $1$-forms on $\diff_0(M)$ with values in differential operators on functions of $M$. The \emph{compatibility} with respect to the family of star products $\{*_{\nabla^f,\, \nu f^*\chi}\}_{f\in \diff_0(M)}$ is :
$$\D(F*_{\nabla^f,\, \nu f^*\chi}G)=\D(F)*_{\nabla^f,\, \nu f^*\chi} G + F*_{\nabla^f,\, \nu f^*\chi} \D(G),$$
for any sections $F,G$ of $\V$.

Consider a smooth path $t\mapsto f_t \in \diff_0(M)$ with tangent vector at $t$ given by $\frac{d}{dt}f_t=Y_t\circ f_t$ that is a vector field on $M$ along $f_t$, we will define $\D$ through a notion of a parallel lift of the path $f_t$ using Theorem \ref{theor:smoothisom}. For this, we use the paths $t\mapsto \nabla^{f_t}$ and $t \mapsto f_t^*\chi$. A natural candidate for $\theta_t$ needed in Theorem \ref{theor:smoothisom} is $\theta_t:=f_t^*\imath(Y_t)\chi$, which comes from the computation of $\frac{d}{dt} f_t^*\chi=df_t^*\imath(Y_t)\chi$. Hence, with these data we obtain equivalences of $*$-products $\sigma\circ B_t \circ Q^{\nabla^{f_0},\,\nu f_0^*\chi}$.

\begin{defi} \label{def:formalconn}
For $Y\circ f\in T_{f}\diff_0(M)$, with $\phi_t^Y$ the flow of the vector field $Y$ on $M$, set :
\begin{itemize}
\item the \emph{connection $1$-form} $\alpha\in \Omega^1(\E(M,\omega),\Gamma\W^3)$ by
\begin{equation*} \label{eq:defalpha}
\alpha_{f}(Y\circ f):=(D^{\nabla^f, \, \nu f^* \chi})^{-1}\left(\left.\frac{d}{dt}\right|_0\overline{\Gamma}^{\nabla^{\phi_t^Y\circ f}}+ \ddto r^{\nabla^{\phi_t^Y \circ f},\,\nu(\phi_t^Y\circ f)^*\chi} - \nu f^*\imath(Y)\chi\right),
\end{equation*}
for $\frac{d}{dt}\overline{\Gamma}^{\nabla^{\phi_t^Y\circ f}}=\frac{1}{2}\omega_{lk}\left(\frac{d}{dt}\Gamma^{\nabla^{\phi_t^Y\circ f}}\right)_{ij}^ky^ly^jdx^i$,
\item the $1$-form $\beta$ with values in formal differential operators:
\begin{equation*}
\beta_{f}(Y\circ f)(F):= \left.\invnu[\alpha_{f}(Y\circ f),Q^{\nabla^f,\,\nu f^*\chi}(F)]\right|_{y=0},\,\textrm{ for }F\in C^{\infty}(M)[[\nu]],
\end{equation*}
\item the \emph{formal connection} $\D:= d^{\diff_0}+\beta$.
\end{itemize}
\end{defi}

\begin{prop}
$\D$ is a formal connection on $\V$ compatible with the family of Fedosov star product $\{*_{\nabla^f,\, \nu f^*\chi}\}_{f\in \diff_0(M)}$.
Moreover, the parallel transport for $\D$ along the path $t\mapsto f_t\in \diff_0(M)$ is given by the equivalence of star product algebra obtained from Theorem \ref{theor:smoothisom} with paths $t\mapsto \nabla^{f_t}$, $t\mapsto f^*_t\chi$ and $\theta_t=f_t^*\imath(Y_t)\chi$, where $\frac{d}{dt}f_t=Y_t\circ f_t$.
\end{prop}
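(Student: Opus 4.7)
The strategy is to read both statements off the family of algebra isomorphisms produced by Theorem~\ref{theor:smoothisom}, applied to the natural paths attached to a curve in $\diff_0(M)$. For a smooth curve $t\mapsto f_t$ with $\ddt f_t = Y_t\circ f_t$, Cartan's formula together with $d\chi=0$ gives
\[
\ddt f_t^*\chi \;=\; f_t^*\mathcal{L}_{Y_t}\chi \;=\; d\bigl(f_t^*\imath(Y_t)\chi\bigr),
\]
so the data $t\mapsto\nabla^{f_t}$, $t\mapsto f_t^*\chi$, and $\theta_t:=f_t^*\imath(Y_t)\chi$ meet the hypotheses of Theorem~\ref{theor:smoothisom}. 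Denote the resulting objects by $v_t^{f_\bullet}$, $B_t^{f_\bullet}$ and set
\[
T_t^{f_\bullet}\;:=\;\sigma\circ B_t^{f_\bullet}\circ Q^{\nabla^{f_0},\,\nu f_0^*\chi},
\]
which is an algebra isomorphism between the fibers of $\V$ at $f_0$ and $f_t$.

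The heart of the argument is the identity $\ddto T_t^{f_\bullet}=-\beta_{f_0}(\dot f_0)$. Since $v_0=1$, the ODE defining $v_t$ yields $\ddto v_t = \invnu h_0$, and hence $\ddto B_t^{f_\bullet}(a) = \invnu[h_0,a]$ for every $a\in\Gamma\W$. Specializing to the curve $f_t=\phi_t^Y\circ f$ (for which $Y_t\equiv Y$), direct inspection shows that the formula for $h_t$ in Theorem~\ref{theor:smoothisom} differs from the formula for $\alpha$ in Definition~\ref{def:formalconn} by a sign, i.e.\ $h_0=-\alpha_f(Y\circ f)$. Consequently
\[
\ddto T_t^{f_\bullet}(F)\;=\;\invnu[h_0,Q^{\nabla^f,\,\nu f^*\chi}(F)]\big|_{y=0}\;=\;-\beta_f(Y\circ f)(F).
\]
Because $\alpha_f(Y\circ f)\in\Gamma\W^3$ while $Q^{\nabla^f,\,\nu f^*\chi}(F)$ starts in $\Gamma\W^0$, the total-degree estimate $\deg\bigl(\invnu[a,b]\bigr)\geq \deg a+\deg b-2$ implies $\beta_f(Y\circ f)(F)$ starts at order $\nu^1$, and the explicit form of $Q$ and of the $\circ$-product shows that each $\beta_k$ is a differential operator. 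Hence $\D=d^{\diff_0}+\beta$ is genuinely a formal connection.

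Both assertions of the proposition now follow. For parallel transport, fix $f_t$ and set $F_t:=T_t^{f_\bullet}(F_0)$. Uniqueness for the ODE defining $v_t$ yields the semigroup identity $T^{f_\bullet}_{s+\tau}=T^{f_{s+\bullet}}_{\tau}\circ T^{f_\bullet}_{s}$, so applying the derivative identity at the base point $f_s$ (with tangent $\dot f_s=Y_s\circ f_s$) gives
\[
\ddt F_t \;=\; -\beta_{f_t}(Y_t\circ f_t)(F_t),
\]
which is exactly $\D F=0$ along $f_t$. For compatibility, the algebra morphism property $T_t^{f_\bullet}(F*_{\nabla^{f_0},\,\nu f_0^*\chi}G)=T_t^{f_\bullet}(F)*_{\nabla^{f_t},\,\nu f_t^*\chi}T_t^{f_\bullet}(G)$ says that the $*$-product of two parallel sections is again parallel, which is equivalent to the Leibniz rule $\D(F*_{\nabla^f,\,\nu f^*\chi}G)=\D F*_{\nabla^f,\,\nu f^*\chi}G+F*_{\nabla^f,\,\nu f^*\chi}\D G$.

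\textbf{Main obstacle.} The substantive step is the sign-and-degree bookkeeping leading to $\ddto T_t^{f_\bullet}=-\beta_{f_0}(\dot f_0)$: one must carefully match $h_t$ against $\alpha$ (including the overall sign), and use the filtration on $\Gamma\W$ to ensure that the formal commutator $\invnu[\alpha,Q(F)]|_{y=0}$ has no $\nu^0$ term and defines a differential operator. Once this computation is in place, both claims are formal consequences of the fact that $T_t^{f_\bullet}$ is a smooth family of $*$-algebra isomorphisms between the fibers of $\V$ produced by Theorem~\ref{theor:smoothisom}.
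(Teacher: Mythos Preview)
Your proof is correct and follows essentially the same route as the paper: both identify $h_t=-\alpha_{f_t}(\dot f_t)$ and deduce that the $\sigma\circ B_t\circ Q$-transported section is $\D$-parallel, then invoke the algebra-isomorphism property of $B_t$ for compatibility. The only differences are cosmetic: you reduce to $t=0$ via the cocycle identity for $T_t^{f_\bullet}$ whereas the paper computes $\D F(f_t)=0$ directly at arbitrary $t$, and you spell out the compatibility argument (differentiating $T_t(F*G)=T_t F * T_t G$) where the paper simply cites \cite{AMS,LLFlast}.
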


\begin{proof}
For the parallel transport property, we consider the equivalence induced by $v_t$ as in Theorem \ref{theor:smoothisom} with the data of the above statement. It means $v_t$ is generated by $h_t$ with
$$ h_t=-(D^{\nabla^{f_t},\,\nu f^*_t\chi})^{-1}\left(\ddt \overline{\Gamma}^{\nabla^{f_t}}+\ddt r^{\nabla^{f_t},\,\nu f^*_t\chi} - \nu f_t^*\imath(Y_t)\chi\right). $$
Starting from $F\in C^{\infty}(M)[[\nu]]$, seen as an element of the fiber of $\V$ above $f_0$, we propagate it above the path $f_t$ by
$$F(f_t):= \left. \left(v_t\circ Q^{\nabla^{f_0},\,\nu f_0^*\chi}(F)\circ v_t\right)\right|_{y=0}.$$
We have to check $(\D_{Y_t\circ f_t} F)(f_t)$ is zero. That is
\begin{eqnarray*}
 (\D_{Y_t\circ f_t} F)(f_t) & = & \frac{d}{dt} \left. \left(v_t\circ Q^{\nabla^{f_0},\,\nu f_0^*\chi}(F)\circ v_t\right)\right|_{y=0} + \beta_{f_t}(Y_t\circ f_t)(F(f_t)), \\
 & = & \frac{1}{\nu} \left.\left(\left[h_t, v_t\circ Q^{\nabla^{f_0},\,\nu f_0^*\chi}(F)\circ v_t\right]\right)\right|_{y=0}+\frac{1}{\nu}  \left.\left[\alpha_{f_t}(Y_t\circ f_t), v_t\circ Q^{\nabla^{f_0},\,\nu f_0^*\chi}(F)\circ v_t\right]\right|_{y=0},\\
& = & 0,
\end{eqnarray*}
which vanishes by definition of $\alpha$.

For the compatibility, we refer to the original computation in \cite{AMS}, see also \cite{LLFlast} for the corresponding statement.
\end{proof}

\section{A formal moment map picture on $\diff_0(M)$}

\subsection{The curvature of $\D$}

The curvature of $\D$ evaluated at $Y\circ f$ and $Z\circ f$ tangent vector at the point $f\in \diff_0(M)$ is defined by
$$\RR(Y\circ f,Z\circ f)F:=\D_{Y\circ f}(\D_{Z\circ f} F) - \D_{Z\circ f}(\D_{Y\circ f} F) - \D_{[Y,Z]\circ f} F,$$
for a section $F$ of $\V$. In the RHS above we use the natural extension of vector fields along $M$ as right invariant vector fields on $\diff_0(M)$, as well as the formula for the Lie bracket \eqref{eq:Liebra}.

\begin{theorem} \label{theor:Ralpha}
The curvature of $\D$ evaluated at the section $F$ of $\V$ and tangent vectors $Y\circ f$ and $Z\circ f$ at $f\in \diff_0(M)$ is given by
\begin{equation}\label{eq:Rdef}
\RR(Y\circ f,Z\circ f)F=\left.\invnu[\Rr(Y\circ f,Z\circ f),Q(F)]\right|_{y=0}
\end{equation}
for $\Rr(Y\circ f,Z\circ f)$ being the $2$-form with values in $\Gamma\W$ defined by
\begin{equation}\label{eq:Ralpha}
\Rr_f(Y\circ f,Z\circ f):=-\nu f^*(\chi(Y,Z)) +  d^{\diff_0}_{f}\alpha(Y\circ f,Z\circ f)+\invnu[\alpha_{f}(Y\circ f),\alpha_{f}(Z\circ f)],
\end{equation}
Moreover, 
\begin{itemize}
\item $\Rr_{f}(Y\circ f,Z\circ f)\in \Gamma \W_{D^{\nabla^f, \nu f^*\chi}}$,
\item $\left.\Rr_{f}(Y\circ f,Z\circ f)\right|_{y=0}=-\nu f^*(\chi(Y,Z))+ O(\nu^2)$.
\end{itemize}
\end{theorem}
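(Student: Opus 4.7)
My plan follows the strategy of \cite{AMS,LLFlast}: compute the curvature directly and rearrange it into inner-derivation form. Expanding $\D = d^{\diff_0}+\beta$ gives $\RR(\Y,\Z)F = (d^{\diff_0}\beta)(\Y,\Z)F + [\beta(\Y),\beta(\Z)]F$. Substituting the defining formula $\beta(\Y)F = \frac{1}{\nu}[\alpha(\Y),Q(F)]|_{y=0}$, the first summand splits into $\frac{1}{\nu}[d^{\diff_0}\alpha(\Y,\Z),Q(F)]|_{y=0}$ plus cross terms involving the fibrewise variation of the Fedosov lift $f\mapsto Q^{\nabla^f,\nu f^*\chi}(F)$, while $[\beta(\Y),\beta(\Z)]F$ produces analogous cross terms together with a doubly-nested bracket.

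The key computational input, analogous to that in \cite{LLFlast}, is the identity $Q^{\nabla^f,\nu f^*\chi}(\beta_f(Y\circ f)F) = \ddto Q^{\nabla^{\phi_t^Y\circ f},\nu(\phi_t^Y\circ f)^*\chi}(F) + \frac{1}{\nu}[\alpha_f(Y\circ f), Q^{\nabla^f,\nu f^*\chi}(F)]$. This follows by differentiating $D^{\nabla^f,\nu f^*\chi}Q^{\nabla^f,\nu f^*\chi}(F) = 0$ along $\Y$, using $D\alpha(Y\circ f) = \ddto\overline{\Gamma}^{\nabla^{\phi_t^Y\circ f}} + \ddto r^{\nabla^{\phi_t^Y\circ f},\nu(\phi_t^Y\circ f)^*\chi} - \nu f^*\imath(Y)\chi$ from Lemma \ref{lemme:Dinverse}, noting that $f^*\imath(Y)\chi$ is central for $\circ$ since it has no $y$-dependence, and comparing the common symbol via Fedosov's bijection $\sigma$. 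Using this identity to reexpress $Q(\beta(\Z)F)$ inside $[\beta(\Y),\beta(\Z)]F$, the cross terms from $(d^{\diff_0}\beta)F$ and from $[\beta(\Y),\beta(\Z)]F$ cancel telescopically, and the Jacobi identity combines the remaining double brackets into $\frac{1}{\nu^2}[[\alpha(\Y),\alpha(\Z)], Q(F)]|_{y=0}$. Since the scalar $-\nu f^*(\chi(Y,Z))$ is central for $\circ$, adding it does not affect the commutator, and \eqref{eq:Rdef}--\eqref{eq:Ralpha} follow.

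For the flatness $D^{\nabla^f,\nu f^*\chi}\Rr = 0$, apply $D$ to \eqref{eq:Ralpha} and use that $D$ is a $\circ$-derivation, that $D$ commutes with $d^{\diff_0}$ up to the fibrewise variation $\Y(D) = \frac{1}{\nu}[\Y(\overline{\Gamma}+r),\cdot]$, and the explicit expression for $D\alpha$ above. The resulting expression reduces to an infinitesimal Bianchi identity obtained by differentiating Fedosov's equation $\overline R + \partial r - \delta r + \frac{1}{\nu}r\circ r = \nu f^*\chi$ along $\Y$ and $\Z$; the scalar $-\nu f^*(\chi(Y,Z))$ absorbs exactly the residual non-central contribution that would otherwise obstruct flatness.

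Finally, the leading-order statement is a $\W$-degree count: since $\ddto\overline{\Gamma} + \ddto r - \nu f^*\imath(Y)\chi$ has total $\W$-degree at least $2$ and $D^{-1} = -Q\delta^{-1}$ raises total $\W$-degree by one, $\alpha(Y\circ f) \in \Gamma\W^3$, so $d^{\diff_0}\alpha(\Y,\Z)|_{y=0}$ and $\frac{1}{\nu}[\alpha(\Y),\alpha(\Z)]|_{y=0}$ both lie in $\nu^2 C^\infty(M)[[\nu]]$. I expect the most delicate step to be the flatness verification, which requires careful sign and index bookkeeping in the differentiated Fedosov equation.
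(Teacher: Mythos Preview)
Your proposal is correct and follows essentially the same approach as the paper. The derivation of \eqref{eq:Rdef}--\eqref{eq:Ralpha} via the $Q$-lift identity and Jacobi is exactly the ``standard computation'' the paper alludes to (and carries out in detail in \cite{LLFlast}), and your degree count for the leading term matches the paper's argument verbatim.

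The only place where the paper is more explicit than your sketch is the flatness check: rather than invoking an ``infinitesimal Bianchi identity'', the paper writes out $D^{\nabla^f,\nu f^*\chi}$ applied to each of the three terms $\Y(\alpha(\Z))$, $\Z(\alpha(\Y))$, $\alpha([\Y,\Z]^{\diff_0})$ using the flows $\phi_t^Y,\phi_t^Z,\phi_t^{[Y,Z]}$, and observes directly that (i) the mixed second variations of $\overline{\Gamma}+r$ cancel against the bracket term by equality of mixed partials, (ii) the commutator contributions cancel against $D\bigl(\tfrac{1}{\nu}[\alpha(\Y),\alpha(\Z)]\bigr)$, and (iii) the residual $\chi$-terms assemble into the Cartan-formula identity $-d(f^*\chi(Y,Z)) - \Lr_Y f^*\imath(Z)\chi + \Lr_Z f^*\imath(Y)\chi + f^*\imath([Y,Z])\chi = 0$, which vanishes because $d\chi=0$. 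Your outline captures this mechanism, but the phrase ``differentiating Fedosov's equation'' slightly obscures that the real input is just $D\alpha(\Y) = \Y(\overline{\Gamma}+r) - \nu f^*\imath(Y)\chi$ from the definition of $\alpha$ together with the closedness of $\chi$.
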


\begin{proof}
In Equation \eqref{eq:Ralpha}, the terms involving the connection $1$-form $\alpha$ come from standard computation of $\RR$. The term $-\nu f^*(\chi(Y,Z))$ doesn't play any role in Equation \eqref{eq:Rdef}, it is just added to make sure $\Rr$ will take values in a space of flat sections.

To check  $\Rr_{f}(Y\circ f,Z\circ f)\in \Gamma \W_{D^{\nabla^f, \,\nu f^*\chi}}$, we compute $D^{\nabla^f, \,\nu f^*\chi}$ applied to the RHS of \eqref{eq:Ralpha}. First, because $f^*(\chi(Y,Z))$ is a function on $M$, 
$$D^{\nabla^f, \,\nu f^*\chi}  f^*(\chi(Y,Z))=d(f^*\chi(Y,Z)).$$
Now, we detail the terms of $D^{\nabla^f, \,\nu f^*\chi}d^{\diff_0}_{f}\alpha(Y\circ f,Z\circ f)$. To do that we use the flows $\phi^Y_t$, $\phi^Z_t$ and $\phi^{[Y,Z]}_t$ of $Y$, $Z$ and $[Y,Z]$ as vector fields on $M$ as well as the formula for the differential of forms which is still valid for forms with values in $\Gamma\W$.
We start with 
\begin{eqnarray}
D^{\nabla^f, \,\nu f^*\chi}\left(Y\circ f (\alpha_{\cdot}(Z\circ \cdot))\right) & = & \left.\frac{d}{dt}\right|_0 D^{\nabla^{\phi^Y_t\circ f},\, \nu (\phi_t^Y\circ f)^*\chi} \alpha_{\phi_t^Y\circ f}(Z\circ \phi_t^Y\circ f) \nonumber \\ 
 & & -  \left.\frac{d}{dt}\right|_0 D^{\nabla^{\phi^Y_t\circ f}, \,\nu (\phi_t^Y\circ f)^*\chi} \alpha_{f}(Z\circ f) \nonumber \\
 & = & \ddto \left(\ddso\left(\overline{\Gamma}^{\nabla^{\phi_s^Z\circ\phi_t^Y\circ f}}+ r^{\nabla^{\phi_s^Z \circ\phi_t^Y\circ f},\,\nu (\phi_s^Z \circ \phi_t^Y\circ f)^*\chi}\right) - \nu (\phi_t^Y \circ f)^*\imath(Z)\chi\right)\nonumber \\
 & & - \frac{1}{\nu}\left[\ddto\left(\overline{\Gamma}^{\nabla^{\phi_t^Y\circ f}}+r^{\nabla^{\phi_t^Y\circ f},\,\nu(\phi_t^Y\circ f)^*\chi}\right),\alpha_{f}(Z\circ f)\right]. \label{eq:Ddalpha1}
\end{eqnarray}
Similarly, we get
\begin{eqnarray}\label{eq:Ddalpha2}
D^{\nabla^f,\, \nu f^*\chi}\left(Z\circ f (\alpha_{\cdot}(Y\circ \cdot))\right) 
 & = & \ddto \left(\ddso\left(\overline{\Gamma}^{\nabla^{\phi_s^Y\circ\phi_t^Z\circ f}}+ r^{\nabla^{\phi_s^Y \circ\phi_t^Z\circ f},\,\nu(\phi_s^Y \circ \phi_t^Z\circ f)^*\chi}\right) - \nu (\phi_t^Z \circ f)^*\imath(Y)\chi\right)\nonumber \\
 & & - \frac{1}{\nu}\left[\ddto\left(\overline{\Gamma}^{\nabla^{\phi_t^Z\circ f}}+r^{\nabla^{\phi_t^Z\circ f},\,\nu(\phi_t^Z\circ f)^*\chi}\right),\alpha_{f}(Y\circ f)\right].
\end{eqnarray}
The last term from the differential of $\alpha$ gives
\begin{eqnarray}
D^{\nabla^f, \,\nu f^*\chi}\alpha_{\cdot}([Y,Z]\circ f)
 & = & \ddto \left(\overline{\Gamma}^{\nabla^{\phi^{[Y,Z]}_t\circ f}}+ r^{\nabla^{\phi^{[Y,Z]}_t\circ f},\,\nu(\phi^{[Y,Z]}_t\circ f)^*\chi}\right) - \nu  f^*\imath([Y,Z])\chi \label{eq:Ddalpha3} 
\end{eqnarray}
In the computation of $D^{\nabla^f, \,\nu f^*\chi}d^{\diff_0}_{f}\alpha(Y\circ f,Z\circ f)$, the terms in $\Gamma$ and $r$ from the first lines of \eqref{eq:Ddalpha1} and \eqref{eq:Ddalpha2} are compensated by the terms in $\Gamma$ and $r$ from \eqref{eq:Ddalpha3}.
The second line of \eqref{eq:Ddalpha1} minus the second line of \eqref{eq:Ddalpha2} are compensated by $$D^{\nabla^f, \,\nu f^*\chi}\invnu[\alpha_{f}(Y\circ f),\alpha_{f}(Z\circ f)],$$
where we use tha fact that the forms $\nu \imath(Y) \chi$ and $\nu \imath(Z) \chi$ are central. What remains is 
\begin{eqnarray*}
D^{\nabla^f, \nu f^*\chi}\Rr_{f}(Y\circ f,Z\circ f) & = & - d(f^*\chi(Y,Z)) - \ddto \nu (\phi_t^Y \circ f)^*\imath(Z)\chi +\ddto \nu (\phi_t^Z \circ f)^*\imath(Y)\chi \\
 & & +\, \nu  f^*\imath([Y,Z])\chi, \\
 & = & 0.
\end{eqnarray*}

\noindent Finally, because $\alpha(\cdot)$ is of degree at least 3, we have
$$\left.\Rr_{f}(Y\circ f,Z\circ f)\right|_{y=0}=-\nu f^*(\chi(Y,Z))+ O(\nu^2).$$
\end{proof}

\subsection{A formal symplectic form on $\diff_0(M)$}

Recall that a \emph{formal symplectic form} on a manifold $F$ is a formal power series of closed $2$-forms
$$\sigma:=\sigma_0+\nu\sigma_1+\ldots \in \Omega^2(F)[[\nu]],$$
which starts with a symplectic form $\sigma_0$. It is a \emph{formal deformation} of the symplectic form $\sigma_0$.

Acting as in the theory of finite dimensional vector bundles, we consider the trace of the curvature $\Rr$ of $\V$.

A \emph{trace} for a star product $\ast$ on a symplectic manifold $(M,\omega)$ is a map
$$ \tr : C_c^\infty(M)[[\nu]] \to \R[\nu^{-1},\nu]]$$
such that $ \tr([F,G]_\ast) = 0,$ for all $F,G\in C_c^\infty(M)[[\nu]]$.
A trace always exits for a given star product on $(M,\omega)$ and it is made unique by asking the following normalisation. Consider local equivalences $B$ of $*|_{C^\infty(U)[[\nu]]}$ with the Moyal star product $\ast_{\mathrm{Moyal}}$ on $U$ a contractible Darboux chart 
$B : (C^\infty(U)[[\nu]],\ast) \to (C^\infty(U)[[\nu]],\ast_{\mathrm{Moyal}})$
 so that $ BF\ast_{\mathrm{Moyal}} BG = B(F\ast G).$
We ask for the normalised trace to statisfy
\begin{equation*}\label{normalized_tr}
\tr(F) = \frac1{(2\pi\nu)^m} \int_M BF\ \frac{\omega^m}{m!}, \textrm{ for all } F\in C_c^\infty(U)[[\nu]].
\end{equation*}
The trace can be written as
$$\tr(F) =\frac1{(2\pi\nu)^m} \int_M F\rho\ \frac{\omega^m}{m!},$$
for $\rho\in C^{\infty}(M)[\nu^{-1},\nu]]$, called the trace density.

Let us denote by \emph{$\tr^{*_{\nabla^f,\,\nu f^*\chi}}$ the normalised trace} of the Fedosov star product $*_{\nabla^f,\,\nu f^*\chi}$ and by \emph{$\rho^{\nabla^f,\,\nu f^*\chi}$ its trace density}.

\begin{defi} \label{def:Omtilde}
Define the formal $2$-form $\Omtilde$ on $\diff_0(M)$ by
$$\Omtilde(Y\circ f, Z\circ f):=-(2\pi)^m\nu^{m-1} \tr^{*_{\nabla^f,\, \nu f^*\chi}}(\left.\Rr_f(Y\circ f, Z\circ f)\right|_{y=0}),$$
for $f\in \diff_0(M)$ and $Y\circ f, Z\circ f \in T_f\diff_0(M)$.
\end{defi}

\begin{theorem}\label{theor:omegatilde}
The formal $2$-form $\Omtilde$ is a formal symplectic form on $\diff_0(M)$ deforming $\Om$.\\
Moreover, the action of $\ham(M,\omega)$ on $\diff_0(M)$ preserves $\Omtilde$.
\end{theorem}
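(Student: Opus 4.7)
The plan is to verify the three properties in turn: (i) $\widetilde\Omega^{\diff_0}$ has $\Omega^{\diff_0}$ as its leading coefficient, (ii) it is closed, and (iii) it is $\ham(M,\omega)$-invariant.

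For (i), I would substitute the second bullet of Theorem \ref{theor:Ralpha}, namely $\left.\Rr_f(Y\circ f,Z\circ f)\right|_{y=0}=-\nu f^*(\chi(Y,Z))+O(\nu^2)$, together with the classical leading behaviour $\rho^{\nabla^f,\,\nu f^*\chi}=1+O(\nu)$ of the trace density, directly into Definition \ref{def:Omtilde}. The prefactor $-(2\pi)^m\nu^{m-1}$ combines with the $1/(2\pi\nu)^m$ from the normalised trace and the factor $-\nu$ from the leading term of $\Rr|_{y=0}$ to yield $\int_M f^*(\chi(Y,Z))\,\omega^m/m!$ at order $\nu^0$. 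Since $Y\circ f$ evaluated at $x\in M$ is $Y_{f(x)}$, one has $f^*(\chi(Y,Z))(x)=\chi_{f(x)}(Y_{f(x)},Z_{f(x)})$, matching this with $\Om_f(\Y_f,\Z_f)$ exactly.

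For (ii), I would couple a Bianchi-type identity for $\Rr$ to the tracial property $\tr^*([F,G]_*)=0$. The first fact is that $\Rr_f(Y\circ f,Z\circ f)\in \Gamma\W_{D^{\nabla^f,\,\nu f^*\chi}}$ (Theorem \ref{theor:Ralpha}), so $\Rr_f|_{y=0}$ genuinely sits in the fiber star-product algebra above $f$. The second fact is that the parallel transport of $\D$ along paths in $\diff_0(M)$ is a star-product algebra isomorphism (established in the paragraph following Definition \ref{def:formalconn}), and since source and target fibers are normalised against the same symplectic form $\omega$, parallel transport intertwines the normalised traces. Differentiating the scalar $f\mapsto \tr^{*_{\nabla^f,\,\nu f^*\chi}}(\Rr_f|_{y=0})$ along a right-invariant vector field $\Y$ therefore reduces to taking the trace of the covariant derivative of $\Rr$ along $\Y$. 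The cyclic sum defining $d^{\diff_0}\widetilde\Omega^{\diff_0}(\Y_0,\Y_1,\Y_2)$ then collapses, via the Bianchi identity $d^{\diff_0}\Rr+\frac1\nu[\alpha,\Rr]=0$, into a sum of traces of $*$-commutators, which vanish.

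For (iii), the right action of $\varphi\in\ham(M,\omega)$ by $R_\varphi(f)=f\circ\varphi$ sends right-invariant vector fields to right-invariant vector fields, so the goal is to prove $\Rr_{f\circ\varphi}(Y\circ f\circ\varphi,Z\circ f\circ\varphi)=\varphi^*\Rr_f(Y\circ f,Z\circ f)$ and then invoke invariance of the normalised trace under the symplectomorphism $\varphi$. The identity $\nabla^{f\circ\varphi}=\varphi^*\nabla^f$ from Proposition \ref{prop:nablaphi}, combined with the obvious $(f\circ\varphi)^*\chi=\varphi^*f^*\chi$, implies by naturality of the Fedosov construction under symplectomorphisms that $\varphi^*$ sends the Fedosov data $(\overline\Gamma^{\nabla^f},r^{\nabla^f,\,\nu f^*\chi},D^{\nabla^f,\,\nu f^*\chi})$ to the corresponding data at $f\circ\varphi$. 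Term by term, the three pieces of $\alpha$, and hence of $\Rr$, at $f\circ\varphi$ are the $\varphi^*$-image of those at $f$. Since $\varphi$ preserves $\omega^m/m!$ and is a $*$-isomorphism between the two products, it intertwines the normalised traces, and $\widetilde\Omega^{\diff_0}$ is $\ham(M,\omega)$-invariant.

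The main obstacle will be the closedness in (ii): the combination of the infinite-dimensional base $\diff_0(M)$ with fiberwise-varying star products requires a careful formulation of the compatibility between $\D$, the star-commutator, and the family of normalised traces along a path in $\diff_0(M)$. Once the Bianchi identity for $\Rr$ and the intertwining of normalised traces by parallel transport are stated cleanly, the cyclic manipulation that kills $d^{\diff_0}\widetilde\Omega^{\diff_0}$ is routine.
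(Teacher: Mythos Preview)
Your proposal is correct and follows essentially the same route as the paper. Part (i) is exactly the paper's argument, just written out in more detail; part (iii) reproduces the paper's naturality argument via Proposition~\ref{prop:nablaphi} and the Fedosov functoriality under symplectomorphisms; and for part (ii) your ``parallel transport intertwines the normalised traces'' is the integrated form of the paper's Lemma~\ref{lemme:tracevariation}, so after differentiation the two arguments coincide, both reducing $d^{\diff_0}\Omtilde$ to a Bianchi identity for $\Rr$ plus the vanishing of traces of $*$-commutators. One small remark: your justification that parallel transport preserves the normalisation (``normalised against the same symplectic form $\omega$'') is a little terse---the point is that the equivalences produced by Theorem~\ref{theor:smoothisom} are of the form $\mathrm{Id}+O(\nu)$, which forces them to intertwine the normalised traces rather than merely send trace to trace.
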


\begin{proof}
The proof is a standard computation similar to that in \cite{LLFlast}. First, one compute directly that $d^{\diff_0}\Omtilde=0$ on right invariant vector fields using the Lemma below which is a particular case of Theorem 3.1 from \cite{FutLLF2}.

\begin{lemma} \label{lemme:tracevariation}
Let $t\mapsto f_t$ be a smooth path in $\diff_0(M)$. Then
$$ \ddto \tr^{*_{\nabla^{f_t},\,\nu f_t^*\chi}}(F)=\tr^{*_{\nabla^{f_0},\,\nu f_0^*\chi}}\left(\left.\invnu[\alpha_{f_0}(\ddto f_t),Q^{\nabla^{f_0},\,\nu f_0^*\chi}(F)]\right|_{y=0}\right). $$
\end{lemma}

The fact that $\Omtilde$ is preserved by the action of $\ham(M,\omega)$, comes from the naturality of Fedosov construction. Indeed, the pull-back by $\varphi \in \ham(M,\omega)$ on $\Gamma \W \otimes \Lambda M$ maps Fedosov flat connections to Fedosov flat connections as :
$$ \varphi^*D^{\nabla^f,\,\nu f^*\chi}(\varphi^{-1})^*=D^{\nabla^{f\circ \varphi},\,\nu (f\circ \varphi)^*\chi},$$
where we use $\nabla^{f\circ \varphi}=\varphi^*\nabla^f$ from Proposition \ref{prop:nablaphi}.
restricts to an isomorphism of flat section algebras
$$\varphi^*: \Gamma \W_{\nabla^f,\,\nu f^*\chi} \rightarrow \Gamma \W_{\nabla^{f\circ \varphi},\,\nu (f\circ \varphi)^*\chi}, $$
where we use $\nabla^{f\circ \varphi}=\varphi^*\nabla^f$ from Proposition \ref{prop:nablaphi}. Hence, the normalised traces are related by 
$$\tr^{*_{\nabla^{f\circ \varphi},\,\nu (f\circ \varphi)^*\chi}}=\tr^{*_{\nabla^f,\,\nu f^*\chi}}\circ (\varphi^{-1})^*.$$
Also the connection $1$-forms are related by
$$\alpha_{f\circ \varphi}((\varphi\cdot)_* \mathcal{Y}) = \varphi^*\alpha_{f}(\mathcal{Y}),$$
for $\mathcal{Y}\in T_f\diff_0(M)$. From this, we deduce $$\tr^{*_{\nabla^{f\circ \varphi},\,\nu (f\circ \varphi)^*\chi}}(\left.\Rr_{f\circ  \varphi}((\varphi\cdot)_*\mathcal{Y},(\varphi\cdot)_* \mathcal{Z})\right|_{y=0})=\tr^{*_{\nabla^f,\, \nu f^*\chi}}(\left.\Rr_f(\mathcal{Y},\mathcal{Z})\right|_{y=0}),$$
for all $\mathcal{Y}, \mathcal{Z}\in T_f\diff_0(M)$ which means that $\Omtilde$ is $\ham(M,\omega)$-invariant.

The fact that $\Omtilde$ deforms $\Om$ is a consequence of our computation of the first term of $\Rr$ in Theorem \ref{theor:Ralpha}.
\end{proof}

\subsection{A formal moment map on $(\diff_0(M),\Omtilde)$}

Let $(X,\sigma)$ be a manifold equipped with a formal symplectic form $\sigma$. Assume there is an action $\cdot$ of a regular Lie group $G$ on $X$ preserving the symplectic form. An \emph{equivariant formal moment map} is a map 
$$\theta:\g \rightarrow C^{\infty}(X)[[\nu]],$$
for $\g$ the Lie algebra of $G$, such that for all $g\in G$, $\mathcal{Y}\in \g$ and $x\in X$
\begin{eqnarray} \label{eq:formalmoment}
\textrm{(formal moment map) } & & \imath\left(\left.\frac{d}{dt}\right|_{t=0}\exp(t\mathcal{Y})\cdot x\right)\sigma  =  d^X \theta(\mathcal{Y}) \\
\textrm{(equivariance) }& & \theta(Ad(g)\mathcal{Y}) = (g\cdot)^*\theta(\mathcal{Y}). \nonumber
\end{eqnarray}

\begin{rem}
Regular Lie group means Lie group admitting an exponential map which is not necessarily true for Fr\'echet Lie group. 
In our context, the group is the group of Hamiltonian diffeomorphisms with Lie algebra the space $C^{\infty}_0(M)$ of functions on $M$ with integral equals to $0$. The exponential of $F\in C^{\infty}_0(M)$ is given by the flow of the Hamiltonian vector field $X_F$.
\end{rem}

\begin{rem}
This formulation of formal moment map as taking values in formal functions on the manifold comes from the notion of quantum moment maps in deformation quantization \cite{gr3}.
\end{rem}

\begin{theorem} \label{theor:formalDonaldson}
The map 
$$\widetilde{\mu}: C^{\infty}_0(M) \rightarrow C^{\infty}(\diff_0(M))[[\nu]]:H \mapsto \left[ f\mapsto (2\pi)^m\nu^{m-1}\tr^{*_{\nabla^f,\,\nu f^*\chi}}(H)\right]$$
is an equivariant formal moment map for the action of $\ham(M,\omega)$ on $(\diff_0(M),\Omtilde)$.\\
Moreover, at first order in $\nu$, we recover Donaldson moment map for the action of $\ham(M,\omega)$ on $(\diff_0(M),\Om)$
\end{theorem}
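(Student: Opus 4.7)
The plan is to establish in turn (i) the moment map identity, (ii) equivariance, and (iii) the leading order recovery of Donaldson's moment map. The essential tools are Lemma \ref{lemme:tracevariation} to differentiate traces along curves in $\diff_0(M)$, together with the curvature formula \eqref{eq:Ralpha} from Theorem \ref{theor:Ralpha}. Equivariance is the easiest part: for $\varphi\in\ham(M,\omega)$ the coadjoint action on $C^\infty_0(M)$ is $\Ad(\varphi)H=H\circ\varphi^{-1}$, and the naturality relation $\tr^{*_{\nabla^{f\circ\varphi},\,\nu(f\circ\varphi)^*\chi}}=\tr^{*_{\nabla^f,\,\nu f^*\chi}}\circ(\varphi^{-1})^*$ already invoked in the proof of Theorem \ref{theor:omegatilde} yields $\widetilde{\mu}(\Ad(\varphi)H)=(\varphi\cdot)^*\widetilde{\mu}(H)$ immediately.

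For the moment map identity, I would fix $f\in\diff_0(M)$ and a tangent vector $Z\circ f$. Applying Lemma \ref{lemme:tracevariation} to the path $t\mapsto\phi_t^Z\circ f$ gives
\[
d^{\diff_0}\widetilde{\mu}(H)_f(Z\circ f)=(2\pi)^m\nu^{m-1}\,\tr^{*_{\nabla^f,\,\nu f^*\chi}}\!\!\left(\left.\invnu[\alpha_f(Z\circ f),Q^{\nabla^f,\,\nu f^*\chi}(H)]\right|_{y=0}\right),
\]
while Definition \ref{def:Omtilde} together with Theorem \ref{theor:Ralpha} and \eqref{eq:infXH} yields
\[
\widetilde{\Omega}_f(f_*X_H,Z\circ f)=-(2\pi)^m\nu^{m-1}\,\tr^{*_{\nabla^f,\,\nu f^*\chi}}\!\!\left(\left.\Rr_f(f_*X_H,Z\circ f)\right|_{y=0}\right).
\]
The two expressions must agree. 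Expanding $\Rr_f$ via \eqref{eq:Ralpha}, the term $\invnu[\alpha_f(f_*X_H),\alpha_f(Z\circ f)]$ is eliminated by the commutator trace identity $\tr^{*_f}\bigl([a,b]_\circ|_{y=0}\bigr)=0$, which is applicable because $\Rr_f(f_*X_H,Z\circ f)$ lies in $\Gamma\W_{D^{\nabla^f,\,\nu f^*\chi}}$ by Theorem \ref{theor:Ralpha}. The $d^{\diff_0}\alpha_f(f_*X_H,Z\circ f)$ contribution is rewritten using the defining relation $D\alpha_f(Y\circ f)=\ddto\overline{\Gamma}^{\nabla^{\phi_t^Y\circ f}}+\ddto r^{\nabla^{\phi_t^Y\circ f},\,\nu(\phi_t^Y\circ f)^*\chi}-\nu f^*\imath(Y)\chi$ coming from Definition \ref{def:formalconn} and Lemma \ref{lemme:Dinverse}, combined with the flatness $D^{\nabla^f,\,\nu f^*\chi}Q^{\nabla^f,\,\nu f^*\chi}(H)=0$.

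The main obstacle is isolating the Hamiltonian character of $X_H$ at the deformation-quantization level. The exact $1$-form $\nu f^*\imath(X_H)\chi$ buried in the definition of $\alpha_f(f_*X_H\circ f)$ must be shown to interact with the flat section $Q^{\nabla^f,\,\nu f^*\chi}(H)$ through $\imath(X_H)\omega=dH$, so that the Hamiltonian data $dH$ sitting inside $[\alpha_f(Z\circ f),Q^{\nabla^f,\,\nu f^*\chi}(H)]$ is traded against the classical pairing $-\nu f^*\chi(f_*X_H,Z)$ appearing as the leading piece of $\Rr_f$. Concretely, one aims to show that, modulo $D^{\nabla^f,\,\nu f^*\chi}$-exact quantities in $\Gamma\W$, the section $\Rr_f(f_*X_H,Z\circ f)+\invnu[\alpha_f(Z\circ f),Q^{\nabla^f,\,\nu f^*\chi}(H)]$ is a $\circ$-commutator, hence vanishes under the normalised trace.

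The Donaldson recovery follows by expanding the normalised Fedosov trace as $\tr^{*_f}(F)=(2\pi\nu)^{-m}\int_M F\,\rho^{\nabla^f,\,\nu f^*\chi}\,\tfrac{\omega^m}{m!}$ with trace density $\rho^{\nabla^f,\,\nu f^*\chi}=1+\nu\rho_1^f+O(\nu^2)$. Since $H\in C^\infty_0(M)$, the singular $\nu^{-1}$ contribution $\tfrac{1}{\nu}\int_M H\,\tfrac{\omega^m}{m!}$ drops out and $\widetilde{\mu}(H)(f)=\int_M H\,\rho_1^f\,\tfrac{\omega^m}{m!}+O(\nu)$. The classical computation of $\rho_1^f$ for a Fedosov star product parametrised by $\nu f^*\chi$ involves the $\omega$-contraction of $f^*\chi$ (up to a scalar-curvature term of $\nabla^f$ irrelevant against mean-zero $H$), which reproduces Donaldson's moment map density for the action of $\ham(M,\omega)$ on $(\diff_0(M),\Om)$.
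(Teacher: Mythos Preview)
Your overall structure is right, and the equivariance argument is fine, but the moment-map step contains a genuine gap and one incorrect claim.

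The incorrect claim is that the term $\invnu[\alpha_f(f_*X_H),\alpha_f(Z\circ f)]\big|_{y=0}$ is killed by the trace because it is ``a $\circ$-commutator''. The trace identity $\tr^{*_f}\big([a,b]_{\circ}\big|_{y=0}\big)=0$ is valid only when $a$ and $b$ are \emph{flat} sections, so that $[a,b]_{\circ}\big|_{y=0}$ is an honest $*$-commutator of formal functions. The elements $\alpha_f(\cdot)$ are \emph{not} $D^{\nabla^f,\,\nu f^*\chi}$-flat (by construction $D\alpha_f(Y\circ f)=\ddto\overline{\Gamma}+\ddto r-\nu f^*\imath(Y)\chi\neq 0$), and the flatness of the full $\Rr_f$ does not confer flatness to its individual summands. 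In fact this commutator term is precisely the piece that survives and matches the left-hand side, so eliminating it is the wrong move.

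What you are missing is the explicit decomposition of $Q^{\nabla^f,\,\nu f^*\chi}(H)$ that the paper records as Lemma~\ref{lemme:QHformula}:
\[
Q^{\nabla^f,\,\nu f^*\chi}(H)=H-\omega_{ij}y^iX_H^j+\tfrac12((\nabla^f)^2_{kq}H)y^ky^q-\imath(X_H)r^{\nabla^f,\,\nu f^*\chi}+\alpha_f(f_*X_H).
\]
Once you have this, bracketing with $\alpha_f(Z\circ f)$ splits into $\invnu[\alpha_f(Z\circ f),\alpha_f(f_*X_H)]$ plus, via Lemma~\ref{lemme:Liederiv}, a Lie-derivative term and a $(\imath(X_H)D+D\imath(X_H))\alpha_f(Z\circ f)$ term. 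Since $\alpha_f(Z\circ f)$ is a $0$-form vanishing at $y=0$, only $\imath(X_H)D\alpha_f(Z\circ f)\big|_{y=0}=-\nu f^*\chi(Z,f_*X_H)$ survives; together with the commutator piece this is exactly $\Rr_f(Z\circ f,f_*X_H)\big|_{y=0}$ (note $d^{\diff_0}\alpha\big|_{y=0}=0$), and the identity follows without invoking any vanishing-under-trace argument. Your final paragraph (``one aims to show that\ldots'') correctly names the target but never reaches it; Lemma~\ref{lemme:QHformula} is the missing bridge.

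For the first-order statement, your outline is close, but be careful: the connection contribution to $\rho_1^f$ is not a ``scalar-curvature term irrelevant against mean-zero $H$'' in general. The paper simply quotes the known leading term $(2\pi)^m\nu^{m-1}\tr^{*_{\nabla^f,\,\nu f^*\chi}}(H)=-\int_M H\,(f^*\chi)\wedge\tfrac{\omega^{m-1}}{(m-1)!}+O(\nu)$, which is Donaldson's moment map.
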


\noindent We will use the next two Lemmas.

\begin{lemma} \cite{gr3} \label{lemme:Liederiv}
Consider a smooth map $t\in [0,1]\mapsto H_t\in C^{\infty}(M)$, then the derivative of the action of $\varphi_t^{H_{\cdot}}$ on $\Gamma\W\otimes\Lambda M$ is given by the formula:
\begin{equation*}
\ddt (\varphi_t^{H_{\cdot}})^*=(\varphi_t^{H_{\cdot}})^*\left(\imath(X_{H_t})D+D\imath(X_{H_t})+\invnu\left[- \omega_{ij}y^i X_{H_t}^j + \frac{1}{2}(\nabla^2_{kq}H_t) y^k y^q-\imath(X_{H_t})r,\cdot \right]\right),
\end{equation*}
where $D$ is obtained with symplectic connection $\nabla$ and the choice of a series of closed $2$-forms.
\end{lemma}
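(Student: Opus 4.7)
The plan is to reduce the identity to the time-independent case at $s=0$, recognize both sides as derivations of the fiberwise $\circ$-product, and verify agreement on the generators $y^m$.

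\textbf{Reduction to $t=0$.} Writing $\varphi_{t+s}^{H_\cdot} = \psi_s^{(t)} \circ \varphi_t^{H_\cdot}$, where $\psi_s^{(t)}$ is the Hamiltonian isotopy restarted at time $t$, one has $(\varphi_{t+s}^{H_\cdot})^* = (\varphi_t^{H_\cdot})^* \circ (\psi_s^{(t)})^*$ and $\ddso \psi_s^{(t)} = X_{H_t}$. Since $(\varphi_t^{H_\cdot})^*$ comes out in front, it suffices to prove the infinitesimal version: for each fixed $H \in C^\infty(M)$ with Hamiltonian vector field $X_H$,
\begin{equation*}
\ddso (\varphi_s^H)^* \;=\; \imath(X_H) D + D\, \imath(X_H) + \invnu\left[-\omega_{ij}y^i X_H^j + \tfrac{1}{2}(\nabla^2 H)_{kq} y^k y^q - \imath(X_H) r,\;\cdot\right].
\end{equation*}

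\textbf{Both sides as derivations of $\circ$.} On $\Gamma\W \otimes \Lambda M$, the pullback $(\varphi_s^H)^*$ acts on the $x$-dependent coefficients and on $dx^J$ in the standard way, and on each generator $y^i$ via the Jacobian of $\varphi_s^H$. Its $s$-derivative at $0$ is therefore a derivation of $\circ$ (the Moyal product is preserved because $\varphi_s^H$ is symplectic), whose action on $y^i$ is the fiber-linear piece $(\partial_j X_H^i) y^j$ and which acts as the usual Cartan Lie derivative on the coefficient functions and on $dx^J$. On the RHS, both $\imath(X_H) D + D \imath(X_H)$ and $\invnu[P,\,\cdot]$ are derivations of $\circ$, so the RHS is a derivation as well. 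Using $\partial = d + \invnu[\overline{\Gamma},\cdot]$ and $\delta = -\invnu[\omega_{ij}y^i dx^j,\cdot]$, write $D = d + \invnu[A,\cdot]$ with $A := \overline{\Gamma} + \omega_{ij}y^i dx^j + r$. Since $A$ has antisymmetric degree one, the graded Leibniz rule for $\imath(X_H)$ on $\circ$-brackets yields
\begin{equation*}
\imath(X_H) D + D \imath(X_H) \;=\; \bigl(\imath(X_H) d + d\,\imath(X_H)\bigr) + \invnu\bigl[\imath(X_H) A,\;\cdot\bigr],
\end{equation*}
with $\imath(X_H) A = \tfrac{1}{2}\omega_{lk}\Gamma^k_{ij} X_H^i y^l y^j + \omega_{ij} y^i X_H^j + \imath(X_H) r$. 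The $\omega y X$ and $\imath(X_H) r$ contributions cancel against their counterparts in the target bracket.

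\textbf{Matching on generators.} After this cancellation, the identity reduces to
\begin{equation*}
(\partial_j X_H^m)\, y^j \;=\; \invnu\bigl[\tfrac{1}{2}\omega_{lk}\Gamma^k_{ij} X_H^i y^l y^j + \tfrac{1}{2}(\nabla^2 H)_{kq} y^k y^q,\; y^m\bigr].
\end{equation*}
Expanding $(\nabla^2 H)_{kq} = \partial_k \partial_q H - \Gamma^p_{kq}\partial_p H$, substituting the Hamiltonian identity $\partial_p H = \omega_{ip} X_H^i$, and invoking the total symmetry of $\omega_{lk}\Gamma^k_{ij}$ in $(l,i,j)$, the Christoffel contributions cancel, so the bracket argument reduces to $\tfrac{1}{2}\partial_l\partial_j H\, y^l y^j$. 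A direct Moyal commutator computation then gives $\invnu[\tfrac{1}{2}\partial_l\partial_j H\, y^l y^j, y^m] = \Lambda^{lm}\partial_l\partial_j H\, y^j$, which equals $(\partial_j X_H^m)\, y^j$ thanks to $X_H^m = \Lambda^{jm}\partial_j H$, concluding the verification.

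\textbf{Main obstacle.} The hard part will be the cancellation of Christoffel contributions between $\imath(X_H)\overline{\Gamma}$ and the covariant Hessian $\nabla^2 H$. This step uses simultaneously that $\nabla$ is a symplectic connection (total symmetry of $\omega_{\bullet k}\Gamma^k_{\bullet\bullet}$) and that $X_H$ is Hamiltonian ($\omega_{ij}X_H^i = \partial_j H$), which is what forces the bracket argument in the lemma to take precisely this form.
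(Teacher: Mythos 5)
Your proposal is correct. Its opening step --- factoring the isotopy as $\varphi_{t+s}^{H_\cdot}=\psi_s^{(t)}\circ\varphi_t^{H_\cdot}$ so that $\ddt(\varphi_t^{H_\cdot})^*=(\varphi_t^{H_\cdot})^*\circ\Lr_{X_{H_t}}$ --- is exactly the paper's first (and essentially only) step; the paper then simply cites Gutt--Rawnsley for the identity $\Lr_{X_H}=\imath(X_H)D+D\,\imath(X_H)+\invnu[\,\cdot\,]$ on $\Gamma\W\otimes\Lambda M$, whereas you prove that identity from scratch. Your derivation of it is sound: writing $D=d+\invnu[A,\cdot]$ with $A=\overline{\Gamma}+\omega_{ij}y^i dx^j+r$ and using that $\imath(X_H)$ is an antiderivation of $\circ$ correctly yields $\imath(X_H)D+D\,\imath(X_H)=(\imath(X_H)d+d\,\imath(X_H))+\invnu[\imath(X_H)A,\cdot]$, the $\omega_{ij}y^iX_H^j$ and $\imath(X_H)r$ terms cancel as you say, and the remaining check on the generators $y^m$ (using the total symmetry of $\omega_{lk}\Gamma^k_{ij}$ in the Darboux frame and $\partial_pH=\omega_{ip}X_H^i$ to reduce the quadratic to $\tfrac12\partial_l\partial_jH\,y^ly^j$) reproduces the fiber-linear part $(\partial_jX_H^m)y^j$ of the tensorial Lie derivative, while both sides agree trivially on functions, on $dx^j$ and on $\nu$. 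What your route buys is self-containedness: the lemma no longer rests on an external formula, at the modest cost of the (standard, but worth stating) observation that both sides are $\nu$-linear, filtration-compatible derivations of $(\Gamma\W\otimes\Lambda M,\circ)$ and are therefore determined by their values on the local generators.
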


\begin{lemma}\label{lemme:QHformula}
Let $H\in C^{\infty}(M)$, $\nabla\in \E(M,\omega)$ and $f\in \diff_0(M)$, we have:
$$Q^{\nabla^f,\,\nu f^*\chi}(H)=H-\omega_{ij}y^i X_{H}^j + \frac{1}{2}((\nabla^f)^2_{kq}H) y^k y^q-\imath(X_{H})r^{\nabla^f,\,\nu f^*\chi}+ \alpha_{f}(f_* X_H).$$
\end{lemma}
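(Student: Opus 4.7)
The plan is to verify the formula by checking the two defining properties of the Fedosov flat lift $Q^{\nabla^f,\nu f^*\chi}(H)$: (a) its symbol (value at $y=0$) equals $H$, and (b) it is annihilated by $D^{\nabla^f,\nu f^*\chi}$. By uniqueness of the Fedosov flat section with prescribed symbol, this would force equality between the RHS and $Q^{\nabla^f,\nu f^*\chi}(H)$.

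For part (a), the first three polynomial terms manifestly vanish at $y=0$. For $\imath(X_H)r^{\nabla^f,\nu f^*\chi}$, note that Fedosov's iteration with the normalization $\delta^{-1}r=0$ forces $r|_{y=0}=0$ since each iteration applies $\delta^{-1}$, which inserts a factor of $y$. For $\alpha_f(f_*X_H)$, Lemma \ref{lemme:Dinverse} gives $\alpha_f(f_*X_H) = -Q^{\nabla^f,\nu f^*\chi}(\delta^{-1}\xi)$ for the 1-form $\xi$ appearing in Definition \ref{def:formalconn}, and $\delta^{-1}\xi|_{y=0}=0$ for the same reason; hence $\alpha_f(f_*X_H)|_{y=0}=0$ via $\sigma\circ Q = \mathrm{id}$.

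For part (b), denote $K_H^f := -\omega_{ij}y^i X_H^j + \tfrac{1}{2}(\nabla^f)^2_{kq}H\,y^k y^q - \imath(X_H)r^{\nabla^f,\nu f^*\chi}$, which is exactly the element of $\Gamma\W$ figuring in Lemma \ref{lemme:Liederiv} for the data $(\nabla^f,\nu f^*\chi)$, so the RHS reads $H + K_H^f + \alpha_f(f_*X_H)$. Taking $Y = f_*X_H\circ f^{-1}$, so that $\phi_t^Y\circ f = f\circ\phi_t^{X_H}$, Proposition \ref{prop:nablaphi} gives $\nabla^{f\circ\phi_t^{X_H}} = (\phi_t^{X_H})^*\nabla^f$, and $f^*\imath(Y)\chi = \imath(X_H)f^*\chi$. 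Using the definition of $\alpha_f$ as $(D^{\nabla^f,\nu f^*\chi})^{-1}$ applied to its defining 1-form, one computes
\[
D^{\nabla^f,\nu f^*\chi}\alpha_f(f_*X_H) = \mathcal{L}_{X_H}\bar{\Gamma}^{\nabla^f} + \mathcal{L}_{X_H} r^{\nabla^f,\nu f^*\chi} - \nu\imath(X_H)f^*\chi.
\]
Thus $D(\text{RHS})=0$ reduces to the identity $DK_H^f + \mathcal{L}_{X_H}(\bar{\Gamma}+r) = -dH + \nu\imath(X_H)f^*\chi$.

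To prove this, apply Lemma \ref{lemme:Liederiv} to the $\W$-valued 1-form $\bar{\Gamma}^{\nabla^f}+r^{\nabla^f,\nu f^*\chi}$, giving
$\mathcal{L}_{X_H}(\bar{\Gamma}+r) = \imath(X_H)D(\bar{\Gamma}+r) + D\imath(X_H)(\bar{\Gamma}+r) + \tfrac{1}{\nu}[K_H^f,\bar{\Gamma}+r]$.
The Fedosov equation together with $\bar R = d\bar{\Gamma} + \tfrac{1}{2\nu}[\bar{\Gamma},\bar{\Gamma}]$ and the identity $\delta\bar{\Gamma}=0$ (immediate from the symmetry of $\omega_{lk}\Gamma^k_{ij}$ in $(l,i,j)$ combined with the wedge antisymmetry) yields $D(\bar{\Gamma}+r) = \nu f^*\chi + \tfrac{1}{2\nu}[\bar{\Gamma}+r,\bar{\Gamma}+r]$. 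Since $\imath(X_H)$ is a graded derivation of $\circ$, substitution gives $\imath(X_H)D(\bar{\Gamma}+r) = \nu\imath(X_H)f^*\chi + \tfrac{1}{\nu}[\imath(X_H)(\bar{\Gamma}+r),\bar{\Gamma}+r]$. Plugging everything into the target identity, expanding $D K_H^f = dK_H^f - \delta K_H^f + \tfrac{1}{\nu}[\bar{\Gamma}+r,K_H^f]$ and $D\imath(X_H)(\bar{\Gamma}+r)$ similarly, the $\circ$-commutator terms cancel pairwise by graded antisymmetry. What remains is a classical identity
$d\tilde K - \delta\tilde K = -dH$,
where $\tilde K = K_H^f + \imath(X_H)(\bar{\Gamma}+r)$; the two copies of $\imath(X_H)r$ cancel, and the covariant Hessian in $K_H^f$ combines with $\imath(X_H)\bar{\Gamma} = \tfrac{1}{2}\Gamma^r_{kq}\partial_r H\,y^k y^q$ (using $\omega_{ip}X_H^i = \partial_p H$ and the symplectic symmetry of the Christoffel symbols) to produce the ordinary Hessian, after which a direct Darboux-coordinate computation verifies the remaining identity.

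The main obstacle is the bookkeeping of the $\circ$-commutator terms generated by Lemma \ref{lemme:Liederiv} applied to the non-flat 1-form $\bar{\Gamma}+r$: one must verify that every Moyal-corrected bracket pairs off exactly and that the crucial cancellation between the covariant-Hessian piece of $K_H^f$ and $\imath(X_H)\bar{\Gamma}$ (which is what forces the $\Gamma^r_{kq}\partial_r H$ contributions to drop out) is engineered precisely by the symplectic condition on $\nabla^f$ and the Hamiltonian equation $\omega_{ip}X_H^i = \partial_p H$.
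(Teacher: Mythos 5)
Your overall strategy --- check that the right-hand side has symbol $H$ and is $D^{\nabla^f,\,\nu f^*\chi}$-flat, then conclude by uniqueness of the flat lift --- is the same as the paper's (the paper defers to the analogous computation in \cite{LLFlast}). The symbol check is fine, and so is the reduction, via Proposition \ref{prop:nablaphi} and naturality, of $D^{\nabla^f,\,\nu f^*\chi}\alpha_f(f_*X_H)$ to $\ddto \overline{\Gamma}^{\nabla^{f\circ \phi_t^{X_H}}}+\ddto r^{\nabla^{f\circ \phi_t^{X_H}},\,\nu(f\circ\phi_t^{X_H})^*\chi}-\nu\,\imath(X_H)f^*\chi$.

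The gap is the step ``apply Lemma \ref{lemme:Liederiv} to the $\W$-valued $1$-form $\overline{\Gamma}+r$''. Lemma \ref{lemme:Liederiv} computes $\ddt(\varphi_t^{H_\cdot})^*a$ for $a$ a genuine global section of $\W\otimes\Lambda M$; it legitimately applies to $r$, since naturality gives $\ddto r^{(\phi_t^{X_H})^*\nabla^f,\ldots}=\ddto(\phi_t^{X_H})^*r^{\nabla^f,\ldots}$. It does not apply to $\overline{\Gamma}$: the Christoffel symbols are not tensorial, and the quantity that actually occurs in $D\alpha$, namely $\ddto\overline{\Gamma}^{(\phi_t^{X_H})^*\nabla^f}=\overline{\Lr_{X_H}\nabla^f}$, is computed from $(\Lr_{X_H}\nabla)^k_{ij}=(\nabla^2_{ij}X_H)^k+(R(X_H,\cdot)\cdot)^k_{ij}$ and therefore contains second derivatives of $X_H$, whereas no term of $\imath(X_H)D(\overline{\Gamma}+r)+D\imath(X_H)(\overline{\Gamma}+r)+\invnu[K_H^f,\overline{\Gamma}+r]$ does ($K_H^f$, $\overline{\Gamma}$ and $r$ involve $X_H$ at most through $\nabla^2 H$, i.e.\ first derivatives, and $d$ adds only one more). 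The lost inhomogeneous term $\tfrac12\omega_{lk}((\nabla^f)^2_{ij}X_H)^ky^ly^jdx^i$ is precisely the one that, in the correct computation, cancels the $\partial$-contribution of $D^{\nabla^f,\,\nu f^*\chi}\bigl(\tfrac12((\nabla^f)^2_{kq}H)y^ky^q\bigr)$. Consequently your residual identity $d\tilde K-\delta\tilde K=-dH$ is false as stated: the two-$y$ part of $d\tilde K$, namely $d\bigl(\tfrac12(\nabla^2_{kq}H)y^ky^q+\imath(X_H)\overline{\Gamma}\bigr)$, does not vanish and has nothing left to cancel it. The paper's computation avoids this by applying Lemma \ref{lemme:Liederiv} only to $r$, keeping $\overline{\Lr_{X_H}\nabla^f}$ intact, and cancelling its $\nabla^2X_H$-part against the $\partial$-term of $D\bigl(\tfrac12(\nabla^f)^2_{kq}H\,y^ky^q\bigr)$ in a term-by-term expansion. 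To repair your argument you would need to replace $\Lr_{X_H}\overline{\Gamma}$ by $\overline{\Lr_{X_H}\nabla^f}$ throughout and redo the bookkeeping, which essentially brings you back to the paper's route.
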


The proof of the Lemma \ref{lemme:QHformula} follows by direct computations, using Lemma \ref{lemme:Liederiv} and similar to the corresponding result from \cite{LLFlast}.

\begin{proof}
The equivariance is immediate from the naturality of the Fedosov construction. The adjoint action by $\varphi\in \ham(M,\omega)$ on $C^{\infty}_0(M)$ is given by the pull-back by $\varphi^{-1}$. So that, for all $f\in \diff_0(M)$,
$$\left[\widetilde{\mu}((\varphi^{-1})^*H)\right](f)=(2\pi)^m\nu^{m-1}\tr^{*_{\nabla^f,\nu f^*\chi}}((\varphi^{-1})^*H)=(2\pi)^m\nu^{m-1}\tr^{*_{\nabla^{f\circ \varphi},\nu (f\circ \varphi)^*\chi}}(H)=\left[\widetilde{\mu}(H)\right](f\circ\varphi)$$

To check the formal moment map equation, we proceed as in \cite{LLFlast}. One compute, for $f\in\diff_0(M)$, $Y\circ f \in T_f\diff_0(M)$ and $\phi_t^Y$ the flow of $Y$ on $M$,
$$ \left(d^{\diff_0}\widetilde{\mu}(H)\right)(Y\circ f)=(2\pi)^m\nu^{m-1}\ddto\tr^{*_{\nabla^{f\circ \phi^Y_t},\,\nu (f\circ \phi^Y_t)^*\chi}}(H)$$
Using Lemma \ref{lemme:tracevariation} and after the formulas from Lemma \ref{lemme:QHformula} and \ref{lemme:Liederiv},
\begin{eqnarray*}
\ddto\tr^{*_{\nabla^{f\circ \phi^Y_t},\,\nu (f\circ \phi^Y_t)^*\chi}}(H) & = & \tr^{*_{\nabla^{f},\,\nu f^*\chi}}\left(\left.\invnu[\alpha_{f}(Y\circ f),Q^{\nabla^{f},\,\nu f^*\chi}(H)]\right|_{y=0}\right),  \\
& = & \tr^{*_{\nabla^{f},\,\nu f^*\chi}}\left(\left.\invnu[\alpha_{f}(Y\circ f),\alpha_{f}(f_* X_H)]\right|_{y=0}\right) \\
& & +\tr^{*_{\nabla^{f},\,\nu f^*\chi}}\left(\left.-\ddto (\varphi_t^H)^*\alpha_{f}(Y\circ f) \right|_{y=0}\right)\\
& & +\tr^{*_{\nabla^{f},\,\nu f^*\chi}}\left(\left.\left(\imath(X_{H_t})D^{\nabla^{f},\,\nu f^*\chi}+D^{\nabla^{f},\,\nu f^*\chi}\imath(X_{H_t})\right)\alpha_{f}(Y\circ f)\right|_{y=0}\right).
\end{eqnarray*}
Now, $\alpha_{f}(Y\circ f)$ is a $0$-form and at $y=0$ it remains,
$$\ddto\tr^{*_{\nabla^{f\circ \phi^Y_t},\,\nu (f\circ \phi^Y_t)^*\chi}}(H) =\tr^{*_{\nabla^{f},\,\nu f^*\chi}}\left(\left.\invnu[\alpha_{f}(Y\circ f),\alpha_{f}(f_* X_H)] + \imath(X_{H_t})D^{\nabla^{f},\,\nu f^*\chi}\alpha_{f}(Y\circ f)\right|_{y=0}\right).$$
By the definition of $\alpha$, we have
$$\left.\imath(X_{H_t})D^{\nabla^{f},\,\nu f^*\chi}\alpha_{f}(Y\circ f)\right|_{y=0}=-\nu f^*\chi(Y,f_*X_H).$$
Finally, by Theorem \ref{theor:Ralpha} giving the formula for $\Rr$ and Equation \eqref{eq:infXH} giving the infinitesimal action of $X_H$ on $\diff_0(M)$, we have
$$\ddto\tr^{*_{\nabla^{f\circ \phi^Y_t},\,\nu (f\circ \phi^Y_t)^*\chi}}(H) =\tr^{*_{\nabla^{f\circ \phi^Y_t},\,\nu (f\circ \phi^Y_t)^*\chi}}(\left.\Rr_f(Y\circ f, f_*X_H)\right|_{y=0}).$$
Multiplying both sides of the above equation by the constant $(2\pi)^m\nu^{m-1}$ one obtains the formal moment map equation \eqref{eq:formalmoment}.

At first order in $\nu$, one knows the first term of the normalised trace
$$(2\pi)^m\nu^{m-1}\tr^{*_{\nabla^{f},\,\nu f^*\chi}}(H)=-\int_M H (f^*\chi)\wedge \dsubvol + O(\nu),$$
which starts by the moment map from \cite{Don1}.
\end{proof}

We finish this section by the characterization of parallel transport along a path of Hamiltonian diffeomorphisms as Hamiltonian automorphisms of the star product \cite{LLF0}.

Let $H_t \in C^{\infty}(M)$ generating $\varphi_t^{H_{\cdot}}$. Consider the smooth path of connections $t\mapsto (\varphi_t^{H_{\cdot}})^*\nabla$ and $t \mapsto (\varphi_t^{H_{\cdot}})^*\chi$, for the symplectic connection $\nabla$. Recall that by naturality of the Fedosov construction $(\varphi_t^{H_{\cdot}})^{*}$ is an isomorphism of flat sections algebra:
$$(\varphi_t^{H_{\cdot}})^{*}:\Gamma\W_{D^{\nabla,\,\nu \chi}} \stackrel{\cong}{\rightarrow} \Gamma\W_{D^{(\varphi_t^{H_{\cdot}})^*\nabla, \,\nu (\varphi_t^{H_{\cdot}})^*\chi}}.$$

Now, we consider $v_t\in\Gamma\W^+$ generated by $h_t:=-\alpha_{\varphi_t^{H_{\cdot}}}(\frac{d}{dt}\varphi_t^{H_{\cdot}})$ obtained from Theorem \ref{theor:smoothisom}. The conjugation $a \in \Gamma\W_{D^{\nabla}}\mapsto v_t\circ a\circ v_t^{-1}\in \Gamma\W_{D^{(\varphi_t^{H_{\cdot}})^*\nabla, \,\nu (\varphi_t^{H_{\cdot}})^*\chi}}$  gives the parallel transport for the formal connection $\D$ along $\{\varphi_t^{H_{\cdot}}\}$.

We define the automorphism $B_t$ of $(C^{\infty}(M)[[\nu]],*_{\nabla,\, \nu \chi})$ by
$$B_t:C^{\infty}(M)[[\nu]]\rightarrow C^{\infty}(M)[[\nu]] :\left.((\varphi_t^{H_{\cdot}})^{-1})^*(v_t\circ Q^{\nabla,\,\nu\chi}(F) \circ v_t^{-1})\right|_{y=0}. $$
Then, using Lemmas \ref{lemme:Liederiv} and \ref{lemme:QHformula}, similarly as in \cite{LLFlast}, one shows:

\begin{prop} \label{prop:Btparallel}
For all $F\in C^{\infty}(M)[[\nu]]$ we have
\begin{equation*} \label{eq:Hamparallel}
\left\{\begin{array}{ccl}
\ddt B_t(F) & = &- \invnu[H_t,B_t(F)]_{*_{\nabla,\,\nu \chi}},\\
B_0(F)& = & F.
\end{array}
\right.
\end{equation*}
Hence, $B_t$ is a Hamiltonian automorphism of $*_{\nabla,\,\nu \chi}$.
\end{prop}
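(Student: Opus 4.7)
The plan is to differentiate $B_t(F)$ directly from its definition and to collapse the result using Fedosov naturality until only a $*_{\nabla,\,\nu\chi}$-commutator with $H_t$ remains. Abbreviate $\psi_t := \varphi_t^{H_\cdot}$ and $u_t := v_t\circ Q^{\nabla,\,\nu\chi}(F)\circ v_t^{-1}$, so that $B_t(F) = (\psi_t^{-1})^* u_t|_{y=0}$. From Theorem \ref{theor:smoothisom} one immediately gets $\ddt u_t = \invnu[h_t,u_t]$ with $h_t = -\alpha_{\psi_t}(X_{H_t}\circ\psi_t)$. Differentiating the identity $(\psi_t^{-1})^*\psi_t^* = \mathrm{id}$ as an operator equality on $\Gamma\W$ and substituting Lemma \ref{lemme:Liederiv} gives
\[
\ddt (\psi_t^{-1})^* = -\bigl(\imath(X_{H_t})D + D\imath(X_{H_t}) + \invnu[g_t,\cdot]\bigr)(\psi_t^{-1})^*,
\]
where $D := D^{\nabla,\,\nu\chi}$ and $g_t := -\omega_{ij}y^iX_{H_t}^j + \tfrac{1}{2}(\nabla^2_{kq}H_t)y^ky^q - \imath(X_{H_t})r^{\nabla,\,\nu\chi}$.

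The key observation is that $(\psi_t^{-1})^* u_t \in \Gamma\W_{D^{\nabla,\,\nu\chi}}$ (by naturality, since $u_t\in \Gamma\W_{D^{\psi_t^*\nabla,\,\nu\psi_t^*\chi}}$) with symbol equal to $B_t(F)$, hence $(\psi_t^{-1})^* u_t = Q^{\nabla,\,\nu\chi}(B_t(F))$. Consequently $D$ annihilates it and, being a $0$-form, so does $\imath(X_{H_t})$; combining the product rule with the fact that $(\psi_t^{-1})^*$ is an algebra morphism for $\circ$ (as $\psi_t$ is symplectic),
\[
\ddt B_t(F) \;=\; \invnu\bigl[(\psi_t^{-1})^*h_t - g_t,\,Q^{\nabla,\,\nu\chi}(B_t(F))\bigr]\big|_{y=0}.
\]

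To evaluate $(\psi_t^{-1})^*h_t$, exploit $\psi_t$-symplecticity to write $X_{H_t} = (\psi_t)_*X_{\psi_t^*H_t}$, so that $h_t = -\alpha_{\psi_t}((\psi_t)_*X_{\psi_t^*H_t})$. This is precisely the quantity provided by Lemma \ref{lemme:QHformula} applied at $f=\psi_t$ with Hamiltonian $\psi_t^*H_t$. Pulling back by $\psi_t^{-1}$ and using Proposition \ref{prop:nablaphi} (which gives $\nabla^{\psi_t}=\psi_t^*\nabla$) together with the naturality of the Fedosov one-form $r$, one finds $(\psi_t^{-1})^*Q^{\nabla^{\psi_t},\,\nu\psi_t^*\chi}(\psi_t^*H_t) = Q^{\nabla,\,\nu\chi}(H_t)$ and $(\psi_t^{-1})^* g(\psi_t,\psi_t^*H_t) = g_t$, whence $(\psi_t^{-1})^*h_t = -Q^{\nabla,\,\nu\chi}(H_t) + H_t + g_t$. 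The $g_t$ pieces cancel and we are left with
\[
\ddt B_t(F) \;=\; \invnu\bigl[H_t - Q^{\nabla,\,\nu\chi}(H_t),\,Q^{\nabla,\,\nu\chi}(B_t(F))\bigr]\big|_{y=0}.
\]
Since $H_t$ is $y$-independent it is central for the fiberwise $\circ$-product, so the $H_t$-term drops out; and since $Q^{\nabla,\,\nu\chi}$ intertwines $*_{\nabla,\,\nu\chi}$ with $\circ$ (with inverse the symbol map), the surviving expression equals $-\invnu[H_t,B_t(F)]_{*_{\nabla,\,\nu\chi}}$. The initial condition $B_0(F)=F$ is immediate from $v_0 = 1$ and $\psi_0 = \mathrm{id}$.

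The main obstacle is the naturality book-keeping in the identification of $(\psi_t^{-1})^*h_t$: one must check that each of the three non-$Q$ summands appearing in Lemma \ref{lemme:QHformula}---the Hessian $\tfrac{1}{2}(\nabla^{\psi_t})^2_{kq}(\psi_t^*H_t)\,y^ky^q$, the contraction $\imath(X_{\psi_t^*H_t})r^{\nabla^{\psi_t},\,\nu\psi_t^*\chi}$, and the linear piece $\omega_{ij}y^iX_{\psi_t^*H_t}^j$---transports under $\psi_t^{-1}$ to its counterpart at the identity. This relies on Proposition \ref{prop:nablaphi} for the connection- and $r$-dependent terms, and on $\psi_t$-symplecticity, which ensures $(\psi_t^{-1})_*X_{H_t} = X_{\psi_t^*H_t}$ and thereby exchanges the linear piece with itself at the identity.
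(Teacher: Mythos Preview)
Your proof is correct and follows essentially the same route as the paper indicates: differentiate $B_t$ via the product rule, use Lemma~\ref{lemme:Liederiv} to handle $\ddt(\psi_t^{-1})^*$, discard the $\imath(X_{H_t})D+D\imath(X_{H_t})$ contribution because $(\psi_t^{-1})^*u_t=Q^{\nabla,\,\nu\chi}(B_t(F))$ is a flat $0$-form, and then invoke Lemma~\ref{lemme:QHformula} together with Fedosov naturality (and Proposition~\ref{prop:nablaphi} for $\nabla^{\psi_t}=\psi_t^*\nabla$) to identify $(\psi_t^{-1})^*h_t-g_t$ with $H_t-Q^{\nabla,\,\nu\chi}(H_t)$. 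The only cosmetic difference is that you apply Lemma~\ref{lemme:QHformula} at $f=\psi_t$ with Hamiltonian $\psi_t^*H_t$ and then pull back, whereas the paper's reference argument first pulls back $\alpha$ by naturality and then applies the $Q(H)$-formula at the identity; both orderings give the same cancellation.
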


Everything we have done in the paper depends on the choice of a symplectic connection. We postpone the analysis of this dependence to a future work.

\end{document}